\begin{document}

\renewcommand{\P}{\mathbb{P}}
\newcommand{\N}{\mathbb{N}}
\newcommand{\A}{\mathbb{A}}
\newcommand{\Z}{\mathbb{Z}}
\newcommand{\Q}{\mathbb{Q}}
\newcommand{\R}{\mathbb{R}}
\renewcommand{\d}{\underline{d}}
\newcommand{\lcm}{\mathrm{lcm}}
\newcommand{\mA}{\mathcal{A}}
\newcommand{\mB}{\mathcal{B}}
\newcommand{\mC}{\mathcal{C}}
\newcommand{\mH}{\mathcal{H}}
\newcommand{\mS}{\mathcal{S}}
\newcommand{\mP}{\mathcal{P}}
\newcommand{\mM}{\mathcal{M}}
\newcommand{\mK}{\mathcal{K}}
\newcommand{\mI}{\mathcal{I}}
\newcommand{\mU}{\mathcal{U}}
\newcommand{\mT}{\mathcal{T}}
\newcommand{\IA}{I_{\mA}}
\newcommand{\IB}{I_{\mB}}
\newcommand{\IS}{I_{\mS}}
\newcommand{\IM}{I_{\mM}}

\newcommand{\lS}{\leq_{\mS}}
\newcommand{\x}{\mathbf{x}}
\renewcommand{\t}{\mathbf{t}}
\newcommand{\kx}{k[\x]}
\newcommand{\kt}{k[\t]}
\newcommand{\mG}{\mathcal{G}}

\newtheorem{thm1}{Theorem}[section]
\newtheorem{lemma}[thm1]{Lemma}
\newtheorem{remark}[thm1]{Remark}
\newtheorem{definition}[thm1]{Definition}
\newtheorem{corollary}[thm1]{Corollary}
\newtheorem{proposition}[thm1]{Proposition}
\newtheorem{example}[thm1]{Example}
\newtheorem{problem}[thm1]{Open problem}
\newtheorem{question}[thm1]{Question}
\newcommand{\nacho}[1]{{\color{brown} \sf $\star\star$ Nacho: [#1]}}

\title{Toric ideals of graphs minimally generated by a Gr\"obner basis}
\author[Ignacio Garc\'ia-Marco]{Ignacio Garc\'ia-Marco}
\address{Ignacio Garc\'ia-Marco, Facultad de Ciencias and Instituto de Matem\'aticas y Aplicaciones (IMAULL), Secci\'on de
Matem\'aticas, Universidad de La Laguna, Apartado de Correos 456, 38200 La Laguna, Spain}
\email{iggarcia@ull.edu.es}
\author[Irene M\'arquez-Corbella]{Irene M\'arquez-Corbella}
\address{Irene M\'arquez-Corbella, Facultad de Ciencias and Instituto de Matem\'aticas y Aplicaciones (IMAULL), Secci\'on de
Matem\'aticas, Universidad de La Laguna, Apartado de Correos 456, 38200 La Laguna, Spain}
\email{imarquec@ull.edu.es}
\author[Christos Tatakis]{Christos Tatakis}
\address{Christos Tatakis, Department of Mathematics, University of Western Macedonia, 52100 Kastoria, Greece}
\email{chtatakis@uowm.gr}

\subjclass[2020]{05E40, 13P10, 14M25}
\keywords{Gr\"obner bases, universal Gr\"obner basis, minimal sets of generators, toric varieties, toric ideals of graphs,  complete intersection}

\begin{abstract}
Describing families of ideals that are minimally generated by at least one, or by all, of their reduced Gr\"obner bases is a central topic in commutative algebra. In this paper, we address this problem in the context of toric ideals of graphs. We say that a graph $G$ is an MG-graph if its toric ideal $I_G$ is minimally generated by some Gr\"obner basis, and a UMG-graph if every reduced Gr\"obner basis of 
$I_G$ forms a minimal generating set. We prove that a graph $G$ is a UMG-graph if and only if its toric ideal $I_G$ is a generalized robust ideal (that is, its universal Gr\"obner basis coincides with its universal Markov basis). Although the class of MG-graphs is not closed under taking subgraphs, we prove that it is hereditary, that is, closed under taking induced subgraphs. In addition, we describe two families of bipartite MG-graphs: ring graphs (which correspond to complete intersection toric ideals, as shown by Gitler, Reyes, and Villarreal) and graphs in which all chordless cycles have the same length. The latter extends a result of Ohsugi and Hibi, which corresponds to graphs whose chordless cycles are all of length $4$.
\end{abstract}

\maketitle

\section{Introduction}

Let $R = \mathbb{K}[x_1,\ldots,x_m]$ be the polynomial ring in $m$ variables over a field $\mathbb{K}$. We say that an ideal $I \subseteq R$ is an {\it MG-ideal} if it is minimally generated by a Gr\"obner basis with respect to some monomial order, and a {\it UMG-ideal} if every reduced Gr\"obner basis of $I$ is a minimal generating set. Determining whether a given ideal $I$ is an MG-ideal or a UMG-ideal is, in general, a nontrivial task. The description of families of MG-ideals and UMG-ideals has become a central topic in commutative algebra.

The problem of describing MG-ideals has been tackled in \cite{CHT, HHHKR} among others; in these works the authors provide families of MG-ideals in several contexts. Furthermore, one common strategy for proving that the quotient ring $R/I$ is a Koszul algebra involves showing that $I$ is an MG-ideal generated by quadratic polynomials (see, e.g., \cite{CDR}). In the context of toric ideals, every normal toric ideal of codimension two is known to be an MG-ideal \cite{DHS}. Even in the one-dimensional case, characterizing MG-ideals is a challenging problem (see \cite[Open Problem 5.4]{Nacho-T1}). In this setting, it is known, for instance, that complete intersections MG-ideals correspond precisely to those arising from free numerical semigroups \cite[Theorem 4.7]{Nacho-T1}, and that numerical semigroups generated by arithmetic sequences also yield MG-ideals \cite{GSS}.

Clearly, the class of UMG-ideals is strictly contained within the class of MG-ideals, and it is not difficult to find examples of ideals that are MG-ideals but not UMG-ideals. A sufficient condition for an ideal $I$ to be a UMG-ideal is that it is robust, that is, minimally generated by its universal Gr\"obner basis. The notion of robust ideals was introduced by Boocher and Robeva in \cite{BOOCHER} and has since been extensively studied in the literature; see, e.g.,  \cite{BOOCHER2,BOOCHER3,Nacho-T1,KTV,KTV2,Sullivant,TATA}. As noted in the introduction of~\cite{BOOCHER}, one of the motivations to study robust ideals is that they are UMG-ideals. However, the family of UMG-ideals is strictly larger than that of robust ideals. For example, the only robust $1$-dimensional toric ideals are principal ideals, while every universally free numerical semigroup provides a UMG-ideal (see \cite{Nacho-T2}).  Interestingly, it is conjectured in \cite[Conjecture 3.19]{Nacho-T2} that all $1$-dimensional toric UMG-ideals are complete intersection, and hence they come from universally free numerical semigroups. For toric ideals, there is also the somehow related notion of generalized robustness, introduced by Tatakis in \cite{TATA} and later studied in \cite{Nacho-T1}. A toric ideal is generalized robust if its universal Gr\"obner basis is equal to the set of all minimal binomials (where a binomial is said to be minimal if it belongs to a minimal set of generators of the ideal).  

The goal of this paper is to study MG-ideals and UMG-ideals in the framework of toric ideals of graphs. The toric ideals of graphs were first considered by Villarreal in \cite{VILL1} and, since then, they have been an active topic of research. An interesting feature of these ideals is that many of their algebraic invariants can be described in terms of the underlying graph. Moreover, this family has turned out to be an interesting testing ground for more general problems and conjectures. We say that an undirected simple graph $G$ is an {\it MG-graph} (respectively a {\it UMG-graph}) if its corresponding toric ideal $I_G$ is an MG-ideal (respectively, a UMG-ideal). We prove that for toric ideals of graphs, the concepts of generalized robust ideals and UMG-ideal coincide, while they are not the same for toric ideals in general. 
Concerning MG-graphs, De Loera, Sturmfels and Thomas proved that complete graphs are MG-graphs (\cite[Theorem 2.1]{DELOERA}). In \cite{HIBI}, the authors provided an infinite family of (non-bipartite) non-MG-graphs. From the results of \cite{DELOERA} and \cite{HIBI} one deduces that the property of being an MG-graph is not preserved under taking subgraphs. Recently, in \cite{WL1,WL2}, the authors provided families of graphs that are not only MG-graphs, but also satisfy that all Betti numbers of the corresponding toric ideal and one of its initial ideals coincide. Interestingly, Ohsugi and Hibi \cite{Ohsugi} proved that for the toric ideal $I_G$ of a bipartite graph $G$, the following conditions are equivalent: (a) $G$ is bipartite chordal, (b) $I_G$ is generated by quadrics, and (c) $I_G$ has a Gr\"obner basis consisting of quadrics; and from the equivalence of (b) and (c) they also derived the following equivalent condition: (d) $R/I_G$ is Koszul. The main contribution of this result can be rephrased as: bipartite chordal graphs are MG-graphs. In the main result of this paper, we extend this result by proving the following.

\begin{thm1}\label{mainresult}
Let $G$ be a bipartite graph such that all minimal generators of $I_G$ have the same degree.  Then $I_G$ is an {\rm MG}-ideal. 
\end{thm1}

Moreover, we prove that whenever $G$ is a bipartite graph and $I_G$ is a complete intersection, then $G$ is an MG-graph; thus answering \cite[Open problem 5.2]{Nacho-T1} for bipartite graphs.

The present manuscript is organized as follows. 

In Section \ref{Section2}, we recall some fundamental facts related to the toric ideals, toric ideals of graphs, and their corresponding toric bases. 

Section \ref{Secion3} is devoted to the study of UMG-graphs. In Theorem \ref{thm:UMGgenrobust}, we show that $G$ is a UMG-graph if and only if $I_G$ is generalized robust. As a consequence, we establish that, for a bipartite graph $G$, the following four conditions are equivalent: (a) $G$ is chordless, (b) $I_G$ is robust, (c) $G$ is a UMG-graph, and (d) $I_G$ is generalized robust.

In Section \ref{Section4} we prove that being an MG-graph is preserved under taking induced subgraphs (Proposition \ref{induced-subgraphsMG}). We also present some constructions that produce MG-graphs (Proposition \ref{MG+UMG=MG}) and we prove that, in the bipartite case, complete intersection toric ideals always define MG-ideals (Corollary \ref{ci-MG ideal}).

Section \ref{Section5} contains the proof of our main result, Theorem \ref{mainresult}. To this end, we introduce in Definition \ref{varTheta} the notion of $\varTheta_r^k$ graphs, which play a key role in the characterization of graphs whose chordless cycles all have length $2k$, for $k\geq 3$ (see Theorem \ref{construction-thetas}). This structural result, together with Proposition \ref{MG+UMG=MG}, allows us to choose an appropriate monomial order to prove Theorem \ref{mainresult}. We also include examples that illustrate why this result cannot be extended straightforwardly to non-bipartite graphs. 

Finally, in Section \ref{sectionconclusion}, we conclude with a discussion of our results and pose several open questions for future research.

\section{Basic Notions on Toric Ideals and their toric bases}\label{Section2}

This section recalls basic notions concerning toric ideals of graphs and their corresponding toric bases.

Let $R = \mathbb K[x_1, \ldots, x_m]$ be a polynomial ring, where $\mathbb K$ is an arbitrary field. Given a monomial order $\succeq$ on $R$, and a polynomial $f\in R$, we denote by ${\rm in}_\succeq(f)$ -- or simply ${\rm in}(f)$ if no confusion arises -- the initial term of $f$ with respect to $\succeq$. Analogously, for an ideal $I \subseteq R$ we denote by ${\rm in}_\succeq(I)$ -- or by ${\rm in}(I)$ if no confusion arises -- the initial ideal of $I$ with respect to $\succeq$. For general facts and results about Gr\"obner bases we refer to \cite{CLO}.

Let $A = \{ {\bf a}_1, \ldots, {\bf a}_m \}\subseteq \mathbb N^n$ be a set of nonzero vectors, and let $\mathbb NA = \big\{ \sum_{i=1}^m b_i {\bf a}_i \mid b_i \in \mathbb N \big\} \subseteq \mathbb N^n$ denote the affine monoid spanned by $A$. We grade the polynomial ring $R$ by the monoid $\mathbb NA$ setting \( \deg_A(x_i) = \mathbf{a}_i \) for all \( i = 1, \ldots, m \). Then, for any ${\bf u} = (u_1, \ldots, u_m) \in \mathbb N^m$, the $A$-degree of the monomial ${\bf x^u} = x_1^{u_1}\cdots x_m^{u_m}$ is defined by $$\deg_A({\bf x^u}) = u_1 {\bf a}_1 + \cdots + u_m {\bf a}_m \in \mathbb NA.$$ The \emph{toric ideal} $I_A$ associated to $A$ is the binomial prime ideal defined as: 
$$I_A=\langle {\bf x^{\bf u}} - {\bf x^{\bf v}}\ :\ \deg_A({\bf x^{\bf u}}) = \deg_A ({\bf x^{\bf v}})\rangle.$$ We refer the reader to \cite{CLS, ES, MS, ST, VILL2} for a detailed study of toric ideals.

 Since $I_A$ is a binomial ideal, its reduced Gr\"obner basis with respect to any monomial order consists of binomials (see, e.g., \cite{ES}). The {\it universal Gr\"{o}bner basis} of $I_A$, denoted by $\mathcal{U}_A$, is the union of all reduced Gr\"obner bases $\mathcal{G}_{\succeq}$ of the ideal $I_A$ as $\succeq$ runs over all monomial orders. This set is finite (see, e.g., \cite[Theorem 1.2]{ST}) and consists entirely of binomials. 
  A {\it Markov basis} $M_A$ is a minimal binomial generating set of the toric ideal $I_A$ (its name Markov basis comes from its connection to Markov chains; see \cite[Theorem 3.1]{DST}).  The {\it universal Markov basis} of the ideal is denoted by $\mathcal{M}_A$ and is defined as the union of all the Markov bases of the ideal. As a consequence of the graded version of Nakayama's Lemma, $\mathcal M_A$ coincides with the set of binomials of $I_A$ not belonging to $\langle x_1,\ldots, x_m \rangle \cdot I_A$ (see \cite[Proposition 2.1]{Nacho-T1}).  Since $A \subseteq \N^n$, we have that $\mathbb N A$ is a pointed affine semigroup, and therefore, $\mathcal M_A$ is also a finite set (see \cite[Theorem 2.3]{THO1}). 
  We say that $I_A$ is robust if $\mathcal U_A$ minimally generates $I_A$; and it is generalized robust if $\mathcal U_A = \mathcal M_A.$ According to \cite[Corollary 5.12 $\&$ Corollary 5.13]{TATA}, a toric ideal is robust if and only if it is generalized robust and has a unique minimal generating set. 

Let $G$ be a finite and simple undirected graph with vertices $V(G)=\{v_1, \ldots, v_n\}$ and edges $E(G)=\{ e_1, \ldots, e_m\}$.
The toric ideal of $G$, denoted by $I_G$, is given by $I_G = I_{A_G}$, where $A_G = \{ {\bf a}_{e_1},\ldots, {\bf a}_{e_m}\} \subseteq \mathbb N^n$. Here, ${\bf a}_{e}$ is the characteristic vector of the edge $e$; that is, if $e = \{v_i,v_j\}$, then ${\bf a}_e = (0,\ldots,0,1,0, \ldots, 0,1,0, \ldots, 0)$ is the vector with ones at the $i$-th and $j$-th positions and zeros elsewhere. 

A {\it walk} $w=(u = u_0, u_1, \ldots, u_{\ell-1}, u_\ell = u')$ connecting $u \in V(G)$ and
$u' \in V(G)$ is a finite sequence of vertices of $G$,
such that $\{u_{j-1},u_{j}\}\in E(G)$, for $j=1,\ldots,\ell$. The {\it length}
of $w$ is the number $\ell$ of its edges, and we say that $w$ is an
{\it even} (respectively {\it odd}) walk if its length is even (respectively odd).
The walk $w$ is called {\it closed} if $u_{0}=u_{\ell}$, and is called a {\it path} if $u_{k}\neq u_{j},$ for every $1\leq k < j \leq \ell$. A {\it cycle}
is a closed path. 
A {\it chord} of $w$ is an edge of $G$ that joins two non-adjacent vertices of $w$. A walk $w$ is called {\it chordless} if it does not have chords.

Consider an even closed walk $w=(u_{0},u_{1},u_{2},\ldots,u_{2q-1},u_{2q} = u_0)$ of length $2q$ with $e_{i_j}=\{ u_{j-1},u_{j} \} \in E(G)$, for $j=1,\ldots,2q$. We write $$B_w =\prod _{k=1}^{q} x_{i_{2k-1}} - \prod _{k=1}^{q} x_{i_{2k}}.$$ Villarreal proved in \cite[Proposition 1]{VILL1} that
$$ I_G = \langle B_w  \ \vert \ w \text{ is an even closed walk}\rangle,$$ 
that is, the toric ideal $I_G$ is generated by the binomials corresponding to even closed walks of the graph $G$. 

Given a graph $G$, the universal Markov basis of $I_G$, denoted by $\mathcal M_G$, has been entirely described in \cite[Theorem 3.2]{TT1}; while the universal Gr\"obner basis of $I_G$, denoted by $\mathcal U_G$, has been described in \cite[Theorem 3.4]{THOMA}. 

Interestingly, for toric ideals of graphs, the inclusion $\mathcal M_G\subseteq \mathcal U_G$ holds (see \cite[Proposition 3.3]{TATA}), and in addition, $\mathcal U_G \subseteq \{ B_w  \ \vert \ w \text{ is an even closed walk}\}$. A complete description of the sets $\mathcal M_G$ and $\mathcal U_G$ is quite involved; we refer the reader to the aforementioned references for further details. In Proposition \ref{pr:bipbases}, we provide an explicit description of these sets in the particular case where $G$ is bipartite. 

\begin{proposition}\label{pr:bipbases}
    Let $G$ be a bipartite graph, then $I_G$ has a unique minimal set of generators, which is \[ \mathcal M_G = \{ B_c \, \vert \, c \text{ is a chordless cycle of G} \};\] and its universal Gr\"obner basis is \[ \mathcal U_G = \{ B_c \, \vert \, c \text{ is a cycle of G} \}.\]
\end{proposition}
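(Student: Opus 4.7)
The plan is to combine Villarreal's presentation $I_G = \langle B_w : w \text{ even closed walk}\rangle$ with the inclusions $\mathcal M_G \subseteq \mathcal U_G \subseteq \{B_w : w \text{ even closed walk}\}$ already recorded, exploiting two bipartite-specific features: every closed walk has even length, and the endpoints of any edge lie in distinct color classes, so parity is preserved upon splitting.

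I would first describe $\mathcal M_G$. Using the characterization $\mathcal M_G = \{\text{binomials in } I_G\} \setminus \mathfrak m\, I_G$ with $\mathfrak m = \langle x_1,\ldots,x_m\rangle$, the aim is to show that $B_w \in \mathfrak m\, I_G$ whenever $w$ is not a chordless cycle. If $w$ repeats a vertex, I split $w$ at that vertex into two closed walks $w_1, w_2$; bipartiteness forces both to have even length, and the alternating partition of the edges of $w$ aligns with that of $w_1$ and $w_2$, so one obtains an identity of the form $B_w = \alpha B_{w_1} \pm \beta B_{w_2}$ with $\alpha,\beta$ nonconstant monomials. If instead $w = c$ is a cycle with a chord $e$, cutting $c$ along $e$ yields two closed walks of even length (since the endpoints of $e$ sit in different color classes), and the same identity applies. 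Conversely, for a chordless cycle $c$ the variables appearing in $B_c$ are pairwise distinct, and no proper subset of the edges of $c$ supports an even closed walk, so $B_c \notin \mathfrak m\, I_G$. This gives $\mathcal M_G = \{B_c : c \text{ chordless cycle of } G\}$. A straightforward induction on the length of $w$ using the same splittings also shows that these binomials generate $I_G$; combined with the indispensability just proved, this yields uniqueness of the minimal generating set.

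For $\mathcal U_G$, if $w$ is an even closed walk that is not a cycle, the splitting above shows the exponent vector of $B_w$ is a positive combination of exponent vectors of binomials in $I_G$, so $B_w$ is not primitive; since $\mathcal U_G$ lies inside the Graver basis, $B_w \notin \mathcal U_G$. This gives $\mathcal U_G \subseteq \{B_c : c \text{ cycle of } G\}$. For the reverse inclusion, given a cycle $c$ I would construct a monomial order $\succeq$ under which $B_c$ appears in the reduced Gr\"obner basis: take a weight vector placing the edges of $c$ high above all others and ranking the two alternating classes of edges of $c$ so that neither monomial of $B_c$ is divisible by the leading term of another primitive binomial $B_{c'}$. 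By the previous step the candidates $B_{c'}$ correspond to cycles, and any cycle $c'$ sharing edges with $c$ will have a strictly smaller leading monomial under $\succeq$, so no reduction of $B_c$ occurs.

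The delicate point is the last step, namely exhibiting a monomial order that certifies $B_c \in \mathcal U_G$ for every cycle $c$, in particular when $c$ has many chords and several other cycles share edges with $c$. The other steps reduce to parity bookkeeping coming from the bipartite structure and the Nakayama-type description of $\mathcal M_G$ already recalled.
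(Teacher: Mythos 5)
Your description of $\mathcal M_G$ is essentially correct: the splitting identities at a repeated vertex and along a chord (parity being preserved by bipartiteness) do show $B_w\in\langle x_1,\ldots,x_m\rangle\cdot I_G$ for every even closed walk $w$ that is not a chordless cycle, and for a chordless cycle $c$ the indispensability argument works because the fiber of $\deg_A(B_c)$ consists of perfect matchings of the induced graph on $V(c)$, which is $c$ itself, so the fiber has exactly two elements and the two terms of $B_c$ are coprime. Likewise, excluding non-cycle walks from $\mathcal U_G$ via primitivity is fine. The genuine gap is exactly at the step you flag, and it is not merely incomplete: the weight vector you propose is oriented the wrong way and demonstrably fails whenever $c$ has a chord. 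Take the paper's own example: $c$ the $8$-cycle with $B_c = x_1x_3x_5x_7 - x_2x_4x_6x_8$ and chord $e_9$, which splits $c$ into the cycles with $B_{c_1} = x_1x_7x_9 - x_2x_6x_8$ and $B_{c_2} = x_3x_5 - x_4x_9$. If the edges of $c$ are weighted high above all other edges, then in $B_{c_2}$ the monomial $x_3x_5$ (two heavy variables) beats $x_4x_9$ (one heavy, one light) no matter how you rank the two alternating classes of $c$, so ${\rm in}(B_{c_2}) = x_3x_5$ divides $x_1x_3x_5x_7$; symmetrically ${\rm in}(B_{c_1}) = x_2x_6x_8$ divides $x_2x_4x_6x_8$. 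Both monomials of $B_c$ then lie in the initial ideal generated by lower-degree leading terms, so $B_c$ cannot belong to the reduced Gr\"obner basis of any such order.

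The correct construction is the opposite one: make the variables \emph{outside} the support of $B_c$ --- in particular all chords --- expensive, i.e., use an elimination-type order for $E(G)\setminus E(c)$. This is precisely the standard argument showing that every circuit of a toric ideal belongs to its universal Gr\"obner basis (see \cite{ST}), and every cycle $c$ of a bipartite graph does give a circuit, since any kernel vector of $A_G$ supported strictly inside $E(c)$ would be supported on a forest, whose edge vectors are linearly independent. (In the example above this yields ${\rm in}(B_{c_2})=x_4x_9$ and ${\rm in}(B_{c_1})=x_1x_7x_9$, and $B_c$ survives in the reduced basis.) Even more efficiently, since the incidence matrix of a bipartite graph is totally unimodular, circuits coincide with the Graver basis, and the chain circuits $\subseteq \mathcal U_G \subseteq$ Graver then pins down $\mathcal U_G = \{B_c \mid c \text{ cycle}\}$ in one stroke, replacing both halves of your second part. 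For comparison, the paper offers no self-contained proof at all: it obtains the proposition by specializing the general descriptions of $\mathcal M_G$ in \cite{TT1} and of $\mathcal U_G$ in \cite{THOMA} to the bipartite case, so your direct route is a reasonable alternative for $\mathcal M_G$, but for $\mathcal U_G$ it must be routed through the circuit lemma rather than your cycle-heavy weight vector.
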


\begin{example}{\rm
Consider the bipartite graph \( G \) shown in Figure~\ref{ex:bip}. It contains three even cycles:
\begin{itemize} 
\item $c_1 = (v_1,v_2,v_3,v_6,v_7,v_8,v_1),$
\item $c_2 = (v_3,v_4,v_5,v_6,v_3)$, and 
\item $c_3 = (v_1,v_2,v_3,v_4,v_5,v_6,v_7,v_8,v_1).$
\end{itemize}
The corresponding binomials are: $$B_{c_1} = x_1 x_7 x_9 - x_2 x_6 x_8, B_{c_2} = x_3 x_5 - x_4 x_9\ \text{and}\ B_{c_3} = x_1 x_3 x_5 x_7 - x_2 x_4 x_6 x_8.$$ 
We observe that $c_1$ and $c_2$ are chordless cycles. In contrast, since the edge $e_9 = \{v_3,v_6\}$ is a chord of $c_3$, the cycle $c_3$ is not chordless. Moreover, we have the identity $B_{c_3} = x_1 x_7 B_{c_2} + x_4 B_{c_1},$ so $B_{c_3} \in \langle x_1,\cdots,x_9 \rangle \cdot I_G$, and thus $B_{c_3}$ is not a minimal binomial.

Indeed, by Proposition \ref{pr:bipbases}, the universal Markov basis of $I_G$ is $\mathcal M_G = \{B_{c_1}, B_{c_2}\}$, and the universal Gr\"obner basis is  $\mathcal U_G = \{B_{c_1}, B_{c_2}, B_{c_3}\}$.
\begin{figure}[h]
\includegraphics[scale=1]{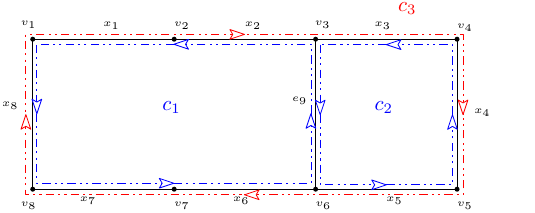}
\caption{Bipartite graph with three even cycles. The cycles $c_1$ and $c_2$ are chordless, while $c_3$ is not; indeed, the edge $e_9$ is a chord of $c_3$. }
\label{ex:bip}
\end{figure}
}
\end{example}

\section{UMG-graphs}\label{Secion3}

In general, UMG-ideals and generalized robust ideals are distinct concepts for toric ideals, as the following example shows.

\begin{example}\label{ex:UMGnotGenRob}{\rm Let $R=\mathbb{K}[x_1,x_2,x_3]$ be a polynomial ring over a field $\mathbb{K}$ and consider the $1$-dimensional toric ideal $I_A$ of $R$ with $A = \{10,15,42\} \subseteq \N$. The following four binomials belong to $I_A:$
\[f = x_1^3 - x_2^2, \hspace{.5cm} g = x_3^5 - x_1^{21}, \hspace{.5cm} h = x_3^5 - x_2^{14},   \hspace{.5cm}  p = x_3^5 - x_1^{18} x_2^{2}.\]
 By \cite[Theorem 4.2]{Nacho-T2} we have that the toric ideal $I_A$ is minimally generated by two binomials; indeed, $I_A = \langle f, g \rangle = \langle f, h \rangle = \langle f, p \rangle$ and, in particular, $f, g, h, p \in \mathcal M_A$. In addition, every reduced Gr\"obner basis of $I_A$ is either $\{f,g\}$ or $\{f,h\}$ and, as a consequence, $I_A$ is a {\rm UMG}-ideal. Nevertheless, $p \notin \mathcal U_A$, so $\mathcal U_A \subsetneq \mathcal M_A$ and, hence, $I_A$ is not generalized robust.
 }
\end{example}

However, the main result of this section establishes that these two notions are equivalent in the setting of toric ideals of graphs.

\begin{thm1}\label{thm:UMGgenrobust}
    A graph $G$ is an {\rm UMG}-graph if and only if $I_G$ is generalized robust.
\end{thm1}

In the proof of Theorem \ref{thm:UMGgenrobust} we use the combinatorial characterization of minimal binomials provided in \cite[Theorem 3.2]{TT1}. As the full statement of this result involves several technical notions, we include for convenience Lemma~\ref{lm:technical}, which extracts and summarizes only the ingredients of \cite[Theorem 3.2]{TT1} that are relevant to our context.

A {\it block} of a graph $G$ is a maximal connected subgraph of $G$ that does not contain any cut vertices. A block can consist of a single vertex, two vertices joined by an edge, or a biconnected graph.  

For an even closed walk $w = (z_0,\ldots,z_\ell = z_0)$, we identify $w$ with the subgraph with vertices $V(w) = \{z_0,\ldots,z_{\ell-1}\}$ and edges $E(w) = \{f_i  \, \vert \, 1 \leq i \leq \ell\},$ where $f_i = \{z_{i-1},z_{i}\}$. We say that a chord $e = \{z_i,z_j\}$ of $w$ is even (respectively odd) if it connects two vertices that are in the same block of $w$ and $j - i$ is odd (respectively even). We say that two odd chords $e = \{z_i,z_j\}$ and $e' = \{z_{i'}, z_{j'}\}$ with $0 \leq i < j < \ell$ and $0 \leq i' < j' < \ell$ {\it cross effectively} if $i' - i$ is odd (then necessarily $i' - j$, $j' - i$, $j' - j$ are all odd) and either $i < i' < j < j'$ or $i' < i < j' < j$. An $F_4$ of $w$ is a cycle of length four which consists of two edges $f_i,f_j$ of the walk $w$ with $i$ and $j$ of the same parity, and two odd chords $e$ and $e'$ that cross effectively in $w$.

\begin{lemma}\label{lm:technical}Let $w= (z_0,\ldots,z_\ell = z_0)$ be an even closed walk such that $B_w$ is a minimal binomial of $I_G$, then:
\begin{itemize}
\item all the chords of $w$ are odd, and
\item if two odd chords $e, e'$ cross effectively, then they form an $F_4$. 
 \end{itemize}
\end{lemma}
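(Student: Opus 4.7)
The plan is to obtain both conclusions as direct consequences of the combinatorial classification of minimal binomials of toric ideals of graphs given in \cite[Theorem 3.2]{TT1}. That result characterizes the even closed walks $w$ for which $B_w$ lies in the universal Markov basis $\mathcal{M}_G$ via an explicit list of necessary conditions on the chord structure of $w$ (parity of chords, effective crossings, $F_4$ configurations, etc.); the two bullets in the statement appear verbatim among these necessary conditions. Thus my proof reduces to quoting the relevant items from the classification after checking that our conventions for blocks, chord parity, and effective crossing agree with those of~\cite{TT1}.

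To motivate the first bullet, suppose $w$ had an even chord $e = \{z_i, z_j\}$; by definition $j - i$ is odd and $z_i, z_j$ lie in a common block of $w$. Then $e$ splits $w$ into two strictly shorter closed walks $w_1 = (z_i, z_{i+1}, \ldots, z_j, z_i)$ and $w_2 = (z_j, z_{j+1}, \ldots, z_\ell = z_0, \ldots, z_i, z_j)$, each of even length (since $j-i+1$ is even). The binomials $B_{w_1}, B_{w_2} \in I_G$ share the variable corresponding to $e$, and a direct bookkeeping of the parities of the positions of the edges in $w_1, w_2$ versus $w$ produces monomials $\mathbf{x}^{\alpha}, \mathbf{x}^{\beta}$ such that $B_w = \mathbf{x}^{\alpha} B_{w_1} \pm \mathbf{x}^{\beta} B_{w_2}$. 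This relation places $B_w$ in $\langle x_1, \ldots, x_m \rangle \cdot I_G$, contradicting the description of $\mathcal{M}_G$ recalled in Section~\ref{Section2} via \cite[Proposition 2.1]{Nacho-T1}, so all chords of $w$ must be odd.

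For the second bullet, a parallel but more intricate rerouting argument applies. Given two odd chords $e = \{z_i, z_j\}$ and $e' = \{z_{i'}, z_{j'}\}$ crossing effectively, the four edges $f_i, f_j, e, e'$ partition $w$ into several sub-arcs. Reassembling these arcs together with $e$ and $e'$ yields candidate even closed sub-walks, and unless these sub-walks collapse to a length-four cycle (that is, unless $e$ and $e'$ form an $F_4$), at least one is a proper even closed walk of $G$, and the associated binomial identity again exhibits $B_w \in \langle x_1, \ldots, x_m\rangle \cdot I_G$, contradicting minimality.

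The delicate step, and the main obstacle, is the second bullet: verifying that excluding the $F_4$ configuration always yields a genuine decomposition requires a careful case analysis on the two possible interlacing orderings $i < i' < j < j'$ or $i' < i < j' < j$, on which block(s) the endpoints belong to, and on the positions of the four distinguished edges. Since this is precisely the analysis performed in \cite[Theorem 3.2]{TT1}, I would prefer to cite that characterization and simply point to the corresponding items in its statement rather than reproduce the full combinatorial verification.
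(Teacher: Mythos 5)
Your proposal coincides with the paper's treatment: the paper offers no independent proof of Lemma~\ref{lm:technical}, stating explicitly that it merely extracts and summarizes the relevant necessary conditions from \cite[Theorem 3.2]{TT1}, which is exactly your strategy, and your supplementary decomposition sketches (splitting along an even chord to exhibit $B_w \in \langle x_1,\ldots,x_m\rangle \cdot I_G$) are consistent with the worked examples following the lemma. One minor caveat on your motivational sketch for the first bullet: excluding even chords alone does not give ``all chords of $w$ are odd,'' since a chord joining vertices in \emph{different} blocks of $w$ (a bridge, in the terminology of \cite{TT1}) is neither even nor odd under the paper's conventions and must also be ruled out --- but this is covered by the citation that constitutes your actual proof, so no gap results.
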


Let us illustrate this result with an example.

\begin{example}{\rm Consider the three even closed walks in Figure \ref{ex:nomin}, and let $G_i$ denote the subgraph induced by the vertex set $V(w_i)$, for each $i \in \{1,2,3\}$.

\begin{figure}[h]
\includegraphics[scale=.6]{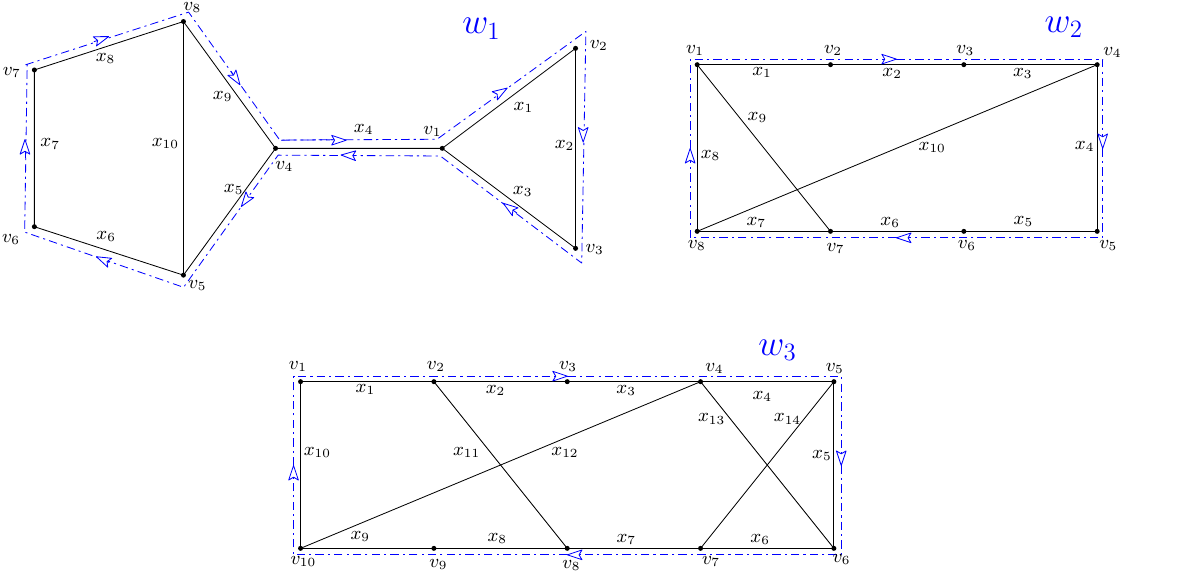}
\caption{The even closed walks $w_1, w_2$ yield non-minimal binomials $B_{w_1}, B_{w_2}$ by Lemma \ref{lm:technical}, while $B_{w_3}$ is minimal.}
\label{ex:nomin}
\end{figure}

In $G_1$, the even closed walk  $w_1 = (v_1,v_2,v_3,v_1,v_4,v_5,v_6,v_7,v_8,v_4,v_1)$  yields the binomial $B_{w_1} = x_1 x_3 x_5 x_7 x_9 - x_2 x_4^2 x_6 x_8$. The edge $e_{10} = \{v_5,v_8\}$ is an even chord of $w_1$; thus, by Lemma \ref{lm:technical}, $B_{w_1}$ is not a minimal binomial. Indeed, \[B_{w_1} = x_7  (x_1 x_3 x_5 x_9 - x_2 x_4^2 x_{10}) + x_2 x_4^2 (x_7 x_{10} - x_6 x_8),\]
with $x_1 x_3 x_5 x_9 - x_2 x_4^2 x_{10}, \, x_7 x_{10} - x_6 x_8 \in I_{G_1}$. Therefore, $B_{w_1} \in \langle x_1,\ldots,x_{10} \rangle \cdot  I_{G_1}$, and $B_{w_1} \notin \mathcal M_{G_1}.$

In $G_2$, the even cycle $w_2 = (v_1,v_2,v_3,v_4,v_5,v_6,v_7,v_8,v_1)$ yields the binomial $B_{w_2} = x_1 x_3 x_5 x_7 - x_2 x_4 x_6 x_8$. The edges $e_9 = \{v_1,v_7\}$ and $e_{10} = \{v_4,v_8\}$ are odd chords of $w_2$; they cross effectively and do not form an $F_4$. Hence, by Lemma \ref{lm:technical}, $B_{w_2}$ is not a minimal binomial. Indeed, \[B_{w_2} = x_5 (x_1 x_3 x_7 - x_2 x_9 x_{10}) + x_2 (x_5 x_9 x_{10} - x_4 x_6 x_8),\]
with $x_1 x_3 x_7 - x_2 x_9 x_{10}, x_5 x_9 x_{10} - x_4 x_6 x_8  \in I_{G_2}$, so $B_{w_2} \in \langle x_1,\ldots,x_{10} \rangle \cdot I_{G_3}$, and $B_{w_2} \notin \mathcal M_{G_2}.$

In $G_3$, the even cycle $w_3 = (v_1,v_2,v_3,v_4,v_5,v_6,v_7,v_8,v_9,v_{10},v_1)$ yields the binomial $B_{w_3} = x_1 x_3 x_5 x_7 x_9 - x_2 x_4 x_6 x_8 x_{10} \in I_{G_3}$. In this case, $B_{w_3} \in \mathcal M_{G_3}$, i.e., $B_{w_3}$ is a minimal binomial.  We observe that $w_3$ has four odd chords, namely the edges $e_{11} = \{v_2,v_8\},\, e_{12} = \{v_4,v_{10}\},$ $e_{13} = \{v_4,v_6\}$ and $e_{14} =  \{v_5,v_7\}$. The only pair of chords that cross effectively are $e_{13}$ and $e_{14}$, and they form an $F_4$. 
}
\end{example}

We are now ready to prove Theorem \ref{thm:UMGgenrobust}.

\begin{proof}[Proof of Theorem \ref{thm:UMGgenrobust}.]
    \noindent
\textbf{($\Longrightarrow$)} Suppose that $G$ is a UMG-graph. Then, by definition, every reduced Gr\"obner basis of $I_G$ is a minimal generating set, so in particular $\mathcal{U}_G \subseteq \mathcal{M}_G$. Moreover, for toric ideals of graphs, it is known that $\mathcal{M}_G \subseteq \mathcal{U}_G$; see \cite[Proposition~3.3]{TATA}. Hence, $\mathcal{U}_G = \mathcal{M}_G$, and therefore $I_G$ is generalized robust.

\medskip
\noindent
\textbf{($\Longleftarrow$)} Now suppose that $I_G$ is generalized robust, i.e., $\mathcal{U}_G = \mathcal{M}_G$. Let $\succeq$ be a monomial order and let $\mathcal G$ denote the reduced Gr\"obner basis of $I_G$ with respect to $\succeq$. Let $M \subseteq \mathcal G$ be a minimal set of generators of $I_G$. Our goal is to prove that $M = \mathcal G$. Assume, by contradiction, that there exists $f \in \mathcal G$ such that $f \notin M$. We write $f = x^\lambda - x^\mu$ with $\lambda, \mu \in \N^m$, and we have that \[ x^\lambda - x^\mu = \sum_{i = 1}^s x^{\delta_i} (x^{\alpha_i} - x^{\beta_i}), \]
    where: \begin{itemize}
        \item $\alpha_i, \beta_i, \delta_i \in \N^m$ for $i = 1,\ldots,s$,
        \item $x^{\lambda} = x^{\delta_1} x^{\alpha_1}$, $x^\mu = x^{\delta_s} x^{\beta_s},$
        \item $x^{\delta_i} x^{\beta_i} = x^{\delta_{i+1}} x^{\alpha_{i+1}}$ for $1 \leq i \leq s-1,$ and
        \item either $x^{\alpha_i} - x^{\beta_i} \in M$ or $x^{\beta_i} - x^{\alpha_i} \in M$;
    \end{itemize}   
    and we choose such an expression with the smallest value of $s$ possible.
    Since $I_G$ is generalized robust, it follows that $f \in \mathcal G \subseteq \mathcal U_G = \mathcal M_G$. Hence, there exists $t \in \{1,\ldots,s\}$ such that $x^{\delta_t} = 1$; otherwise $f \in \langle x_1,\ldots,x_n \rangle \cdot I_G$, contradicting that $f \in \mathcal M_G$. We now claim that $t = s$. Suppose instead that $t < s$. Then we may write:
    \[ x^\lambda - x^\mu = \underbrace{\sum_{i = 1}^{t-1} x^{\delta_i} (x^{\alpha_i} - x^{\beta_i})}_{h_1} + \underbrace{(x^{\alpha_t} - x^{\beta_t})}_{h_2} + \underbrace{\sum_{i = t+1}^s x^{\delta_i} (x^{\alpha_i} - x^{\beta_i})}_{h_3},  \]
    where $h_1 = x^\lambda - x^{\alpha_t}$ and $h_3 = x^{\beta_t}  - x^{\mu}$ are both nonzero.

    Since $f \in \mathcal G$,  we know that ${\rm in}(f) = x^\lambda$ and $x^\mu \notin {\rm in}(I_G)$. As $h_3 \in I_G$ and $x^\mu \notin {\rm in}(I_G)$, then 
    ${\rm in}(h_3) = x^{\beta_t}$ and, hence, the remainder of $x^{\beta_t}$ modulo $\mathcal G$ is $\overline{x^{\beta_t}}^{\mathcal G} = x^{\mu}$. Similarly, since either $h_2$ or $-h_2$ belongs to $\mathcal G$ and $x^{\beta_t} \in {\rm in}(I_G)$, it follows that $x^{\alpha_t} \notin {\rm in}(I_G)$, and $\overline{x^{\beta_t}}^{\mathcal G} = x^{\alpha_t}.$ Hence, $x^{\alpha_t} = x^\mu$, so $f = h_1$, contradicting the minimality of $s$.

    Therefore, $t = s$, and in particular: $$f = x^{\lambda} - x^{\mu}\ \text{and}\ g := x^{\alpha_s} - x^{\beta_s} = x^{\alpha_s} - x^{\mu}.$$
    Since $f,g \in \mathcal G$ we have that their leading terms ${\rm in}(f) = x^{\lambda}$ and ${\rm in}(g) = x^{\alpha_s}$ are minimal generators of ${\rm in}(I).$
    We now observe that $f - g = x^{\lambda} -  x^{\alpha_s} \in I_G$. Moreover, $\gcd(x^\lambda, x^{\alpha_s}) = 1$; otherwise, there exists $j \in \{1,\ldots,n\}$ such that $\frac{f - g}{x_j} =  \frac{x^\lambda}{x_j} - \frac{x^{\alpha_s}}{x_j} \in I_G$, which would contradict the minimality of $x^{\lambda}$ or $x^{\alpha_s}$ in ${\rm in}(I).$     

Now, since both $f$ and $g$ are in $\mathcal M_G$, there exist two even closed walks $w$ and $w'$ in $G$ such that $f = B_w = x^\lambda - x^\mu \in \mathcal M_G$ and $g = B_{w'} = x^{\alpha_s} - x^\mu \in \mathcal M_G$ and $\gcd(x^\lambda, x^{\alpha_s}) = 1$. We write $w = (z_0,z_1,\ldots,z_{2\ell} = z_0)$. Since $\gcd(x^\lambda, x^{\alpha_s}) = 1$, all the edges $f_i = \{z_i,z_{i+1}\} \in E(w)$ with $i$ even do not belong to $w'$. Hence, for each $i \in \{0,\ldots,2 \ell - 1\}$ there exists an edge $f_i' = \{z_i,z_j\} \in E(w') - E(w)$ connecting $z_i$ with another vertex of $V(w)$ different from $z_{i-1}$ and $z_{i+1}$; then $f_i'$ is a chord of $w$ and, by Lemma \ref{lm:technical},  $f_i'$ has to be an odd chord; so $i$ and $j$ have the same parity. Among all these chords, take $f_i' = \{z_i, z_j\}$ with $i < j$ such that the difference $j-i$ is the smallest possible. We separate two cases according to the parity of $i$ (and $j$).

\textit{Case 1:} $i,j$ are even. Then consider $f_{i+1}' = \{z_{i+1}, z_k\}$. The minimality of $j-i$ implies that $f_i'$ and $f_{i+1}'$ are two odd chords of $w$ that cross effectively. Since $B_w$ is minimal, Lemma \ref{lm:technical} implies that $f_i', f_{i+1}'$ form and $F_4$ in $w$, and thus $k =  j+1$. 
Therefore, there is an $F_4$ with edges $f_{i}, f_{i+1}', f_j, f_{j+1}'$ and $i,j$ even. Thus, there exists a binomial $h \in I_G$ of degree $2$, such that $h \notin \{f,g\}$ and whose initial form divides either $x^\mu$ or $x^{\alpha_s}$, contradicting the minimality of $f$ and $g$.

\textit{Case 2:} $i, j$ are odd. Then consider $f_{j-1}' = \{z_{j-1}, z_k\}$. Again, since $j-i$ is the smallest possible, then $f_i$ and $f_{j-1}'$ are two odd chords of $w$ that cross effectively. Since $B_w$ is minimal, by Lemma \ref{lm:technical},  both edges form an $F_4$ of $w$, and hence $k =  i-1$. Therefore, there is an $F_4$ with edges $f_{i-1}, f_{i}', f_{j-1}, f_{j-1}'$ and $i-1,j-1$ even. Thus, again there is a binomial $h \in I_G$ of degree $2$, such that $h \notin \{f,g\}$ and whose initial form divides either $x^\mu$ or $x^{\alpha_s}$, a contradiction.

In both cases, we arrive at a contradiction, so we must have  $\mathcal{G} = M$, and hence every reduced Gr\"obner basis of $I_G$ is a minimal generating set. Therefore, $G$ is a UMG-graph.

\end{proof}

 A graph $G$ is said to be \emph{chordless} if every cycle in $G$ is chordless. According to \cite[Corollary 3.5]{Nacho-T1}, a bipartite graph is chordless if and only if its toric ideal is generalized robust. Combining this characterization with the fact that $I_G$ admits a unique minimal generating set when $G$ is bipartite, and with the previous theorem, we obtain the following result.

\begin{corollary}\label{chordless-UMG} Let $G$ be a bipartite graph, the following are equivalent:
\begin{itemize}
\item $G$ is chordless.
\item $I_G$ is robust.
\item $G$ is a UMG-graph.
\item $I_G$ is generalized robust.
\end{itemize}
\end{corollary}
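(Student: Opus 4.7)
The plan is to combine three ingredients already established in the excerpt: Theorem~\ref{thm:UMGgenrobust}, Proposition~\ref{pr:bipbases}, and the cited \cite[Corollary~3.5]{Nacho-T1}. The key observation is that for bipartite $G$, Proposition~\ref{pr:bipbases} tells us that $I_G$ has a \emph{unique} minimal set of generators, namely $\{B_c \mid c \text{ chordless cycle of } G\}$. This uniqueness will let us close the loop between robustness and generalized robustness.

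First, I would deduce (1) $\Leftrightarrow$ (4) immediately from \cite[Corollary~3.5]{Nacho-T1}, which is stated to give exactly this equivalence in the bipartite setting. Next, (3) $\Leftrightarrow$ (4) is the content of Theorem~\ref{thm:UMGgenrobust}, applied without any additional hypothesis. So only the equivalence of robustness (2) with the other three conditions needs work.

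For (2) $\Rightarrow$ (4): by definition, if $I_G$ is robust then $\mathcal U_G$ is a minimal generating set of $I_G$, so $\mathcal U_G \subseteq \mathcal M_G$; the reverse inclusion $\mathcal M_G\subseteq \mathcal U_G$ holds for all toric ideals of graphs by \cite[Proposition~3.3]{TATA}. Hence $\mathcal U_G=\mathcal M_G$, i.e., $I_G$ is generalized robust. For (4) $\Rightarrow$ (2): the excerpt recalls that a toric ideal is robust if and only if it is generalized robust \emph{and} admits a unique minimal generating set (\cite[Corollaries~5.12 \& 5.13]{TATA}). Since $G$ is bipartite, Proposition~\ref{pr:bipbases} ensures the uniqueness of the minimal generating set, so generalized robustness already forces robustness.

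I do not anticipate a real obstacle here: the corollary is essentially a bookkeeping assembly of previously established equivalences together with the uniqueness of the Markov basis in the bipartite case. The only subtlety is making sure to invoke the right direction of \cite[Corollaries~5.12 \& 5.13]{TATA} to convert generalized robustness into robustness via the uniqueness of the minimal generating set provided by Proposition~\ref{pr:bipbases}.
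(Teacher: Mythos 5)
Your proposal is correct and matches the paper's own argument, which likewise assembles the corollary from \cite[Corollary~3.5]{Nacho-T1} for the equivalence of chordlessness and generalized robustness, Theorem~\ref{thm:UMGgenrobust} for the UMG equivalence, and the uniqueness of the minimal generating set from Proposition~\ref{pr:bipbases} combined with \cite[Corollaries~5.12 \& 5.13]{TATA} to bridge robustness and generalized robustness. Your direct derivation of (2) $\Rightarrow$ (4) via $\mathcal{U}_G \subseteq \mathcal{M}_G \subseteq \mathcal{U}_G$ is a harmless micro-variation of the same bookkeeping.
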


\section{MG-graphs}\label{Section4}

Given an ideal $I$ in a polynomial ring, determining whether it is an MG-ideal is, in general, a difficult problem. However, using \texttt{SageMath} \cite{sage}, one can algorithmically verify whether a given ideal $I$ is an MG-ideal by computing all of its reduced Gr\"obner bases. As an illustration, the following function takes as input a homogeneous (with respect to a positive grading, that is,  ${\rm deg}(x_i) > 0$ for all $i$)  ideal $I$ and returns $1$ if it is an MG-ideal or $0$ otherwise. The function works by comparing the minimal number of generators of the ideal with the minimal size among its reduced Gr\"obner bases.

{\small

 \begin{verbatim}
def check_mg_ideal(I):
    # Convert the ideal to Singular and compute minimum number of generators
    J_singular = singular(I)
    min_gen = len(J_singular.minbase())  # Number of generators in minbase
        
    # Compute the size of the smallest reduced GB via the Gröbner fan
    G = I.groebner_fan()
    L = G.reduced_groebner_bases()
    min_size_GB = min(len(basis) for basis in L)  

    # Compare and print the result
    if min_size_GB == min_gen:
        return (1)
    else:
        return (0)
\end{verbatim}
}

This function requires the computation of the whole Gr\"obner fan, and thus it can only handle small examples in a reasonable amount of time. We have used this function together with the \texttt{Nauty} library \cite{McKay} to check that the only bipartite graph with $\leq 8$ vertices that is not an MG-graph is the cube graph (the $1$-skeleton of the $3$-dimensional cube).

\begin{example} \label{First-BAD}{\rm
Consider the cube graph $Q_3$, see Figure \ref{counterexample}. Its toric ideal $I_{Q_3}$ has a unique minimal set of generators that consists of ten binomials:
\begin{eqnarray}
I_G & = &\langle x_1z_1-y_1y_4, x_2z_2-y_1y_2, x_3z_3-y_2y_3, x_4z_4-y_3y_4, x_1x_3-x_2x_4,  z_1z_3-z_2z_4,\nonumber \\
& & x_2y_3z_1-x_3y_1z_4, x_4y_2z_1-x_3y_4z_2, x_4y_1z_3-x_1y_3z_2, x_1y_2z_4-x_2y_4z_3 \rangle. \nonumber 
\end{eqnarray}

\begin{figure}[h]
\begin{center}
\includegraphics{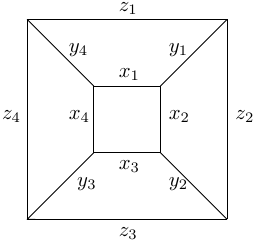}
\caption{$Q_3$ is a non MG-graph}
\label{counterexample}
\end{center}
\end{figure}

Every reduced Gr\"obner basis of $I_G$ has at least twelve elements, while the unique Markov basis of the ideal has ten elements. It follows that there is no monomial order such that the ideal $I_G$ is minimally generated by a Gr\"obner basis. 
}
\end{example}

Note that the property of being an MG-graph is not preserved under taking subgraphs, even within the class of bipartite graphs, as the following example shows.

\begin{example} {\rm
Consider the bipartite graph $H$ obtained by adding an edge to the graph $Q_3$ of Example \ref{First-BAD}; see Figure \ref{counterexample!}. The unique minimal set of generator of the toric ideal $I_H$ consists of twelve quadratic and one cubic binomials:
\begin{eqnarray}
\mathcal M_H & = \{ & x_1z_1-y_1y_4,\, x_2z_2-y_1y_2,\,  x_3z_3-y_2y_3,\,  x_4z_4-y_3y_4,\,  x_1x_3-x_2x_4,\,   \nonumber \\
& & z_1z_3-z_2z_4,\,  wy_3-x_4z_3,\,  wz_4-y_4z_3, \, wx_3-x_4y_2,\, wz_1-y_4z_2, \nonumber \\
& & wy_1-x_1z_2, \,  wx_2-x_1y_2,\,  x_2y_3z_1-x_3y_1z_4\ \}. \nonumber 
\end{eqnarray}

\begin{figure}[h]
\begin{center}
\includegraphics{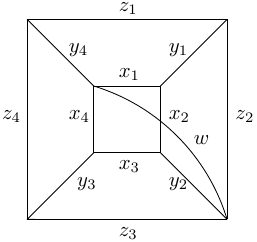}
\caption{Example of an MG-graph.}
\label{counterexample!}
\end{center}
\end{figure}

One has that $I_{H}$ has many reduced Gr\"obner basis that coincide with the Markov basis of $I_H$, i.e. $H$ is an MG-graph. In particular,
the reduced Gr\"obner basis of $I_G$ with respect to the lexicographic order with $x_3 > x_4 > y_2 > y_3 > z_2 > y_1 > x_1 > x_2 > z_1 > z_4 > w > y_4 > z_3$ equals $\mathcal M_G$.

}

\end{example}

Recall that an induced subgraph of $G = (V(G), E(G))$ is a subgraph $H=(V(H), E(H))$ of $G$ such that $V(H)\subseteq V(G)$ and the set $E(H)$ contains the edges of $G$ whose endpoints are both in $V(H)$. Although the MG-property is not preserved under taking subgraphs, in the next proposition we show that it is hereditary (i.e., preserved under taking induced subgraphs).

\begin{proposition}\label{induced-subgraphsMG}
Let $G$ be an {\rm MG}-graph, and let $H$ be an induced subgraph of $G$. Then $H$ is also an {\rm MG}-graph.
\end{proposition}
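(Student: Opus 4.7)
The plan is to produce a minimal Gr\"obner basis of $I_H$ by restricting the one for $I_G$. Fix a monomial order $\succeq$ on $R = \mathbb{K}[x_e : e \in E(G)]$ for which the reduced Gr\"obner basis $\mathcal{G}$ of $I_G$ minimally generates $I_G$ (equivalently, $\mathcal{G} \subseteq \mathcal{M}_G$). Partition the variables as $X = \{x_e : e \in E(H)\}$ and $Y = \{x_e : e \in E(G) \setminus E(H)\}$, so $I_H = I_G \cap \mathbb{K}[X]$, and set $\mathcal{G}_H := \mathcal{G} \cap \mathbb{K}[X]$. The goal is to show that $\mathcal{G}_H$ is simultaneously the reduced Gr\"obner basis of $I_H$ with respect to $\succeq\!|_X$ and a minimal generating set of $I_H$.

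The first ingredient, which is where the induced hypothesis enters, is the observation that for every binomial $x^\alpha - x^\beta \in I_G$, $x^\alpha \in \mathbb{K}[X]$ if and only if $x^\beta \in \mathbb{K}[X]$. The reason is that the common $A_G$-degree of $x^\alpha$ and $x^\beta$ is supported on $V(H)$ as soon as one of them lies in $\mathbb{K}[X]$, and since $H$ is induced, every edge of $G$ with both endpoints in $V(H)$ already lies in $E(H)$. Consequently, for $g \in \mathcal{G}$, one has $\mathrm{in}_\succeq(g) \in \mathbb{K}[X]$ if and only if $g \in \mathcal{G}_H$. This makes $\mathcal{G}_H$ a Gr\"obner basis of $I_H$ under $\succeq\!|_X$: for every $f \in I_H$, $\mathrm{in}_\succeq(f) \in \mathbb{K}[X] \cap \mathrm{in}_\succeq(I_G)$ is divisible by some $\mathrm{in}_\succeq(g)$ with $g \in \mathcal{G}_H$, and reducedness is inherited from $\mathcal{G}$.

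The main step is the minimality of $\mathcal{G}_H$ as a generating set of $I_H$. From $\mathcal{G} \subseteq \mathcal{M}_G$ one immediately gets $\mathcal{G}_H \subseteq \mathcal{M}_G \cap \mathbb{K}[X]$, and the inclusion $\langle X \rangle I_H \subseteq \langle X \cup Y \rangle I_G$ together with the characterization of the universal Markov basis recalled in Section~\ref{Section2} yields $\mathcal{M}_G \cap \mathbb{K}[X] \subseteq \mathcal{M}_H$; hence every element of $\mathcal{G}_H$ is a minimal binomial of $I_H$. To finish, I would invoke graded Nakayama in the $\mathbb{N}A_H$-grading: if $g_1, \ldots, g_k \in \mathcal{G}_H$ share an $A_H$-degree $\mathbf{d}$ and satisfy $\sum c_i g_i \in \langle X \rangle I_H$ with $c_i \in \mathbb{K}$, then this relation also lies in $\langle X \cup Y \rangle I_G$, so the linear independence of the elements of $\mathcal{G}$ in $A_G$-degree $\mathbf{d}$ modulo $\langle X \cup Y \rangle I_G$ (which holds because $\mathcal{G}$ is a minimal generating set of $I_G$) forces $c_i = 0$ for all $i$. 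Combined with the fact that $\mathcal{G}_H$ spans $I_H$, the images of $\mathcal{G}_H$ form a basis of $I_H / \langle X \rangle I_H$, so $\mathcal{G}_H$ minimally generates $I_H$ and $H$ is an MG-graph.

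The delicate point is the minimality step: linear independence modulo $\langle X \cup Y \rangle I_G$ must descend to linear independence modulo $\langle X \rangle I_H$, and this transfer works precisely because $\langle X \rangle I_H \subseteq \langle X \cup Y \rangle I_G$. The induced hypothesis on $H$ is essential in the first ingredient (so that the leading term lying in $\mathbb{K}[X]$ already forces the whole binomial into $\mathbb{K}[X]$) and cannot be weakened to a general subgraph hypothesis, in line with the fact recalled in the introduction that the MG-property is not preserved under arbitrary subgraphs.
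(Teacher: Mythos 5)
Your proof is correct, and its first half coincides with the paper's: both restrict the minimal reduced Gr\"obner basis $\mathcal{G}$ of $I_G$ to $\mathcal{G}_H = \mathcal{G} \cap \mathbb{K}[X]$ and verify the Gr\"obner property of $\mathcal{G}_H$ by the same observation --- for a binomial of $I_G$, if one monomial lies in $\mathbb{K}[X]$ then so does the other, since the common $A_G$-degree is supported on $V(H)$ and $H$ is induced; this is exactly the combinatorial pure subring phenomenon the paper cites from \cite{OHH} and also reproves directly. Where you genuinely diverge is the minimality step. The paper invokes the graph-specific result \cite[Theorem 4.13]{TT1} (whether $B_w$ is a minimal binomial depends only on the induced subgraph on $V(w)$) to conclude that $\mathcal{G}_H$ minimally generates $I_H$, whereas you argue purely algebraically: from the elimination identity $I_H = I_G \cap \mathbb{K}[X]$ and the inclusion $\langle X \rangle I_H \subseteq \langle X \cup Y \rangle I_G$ you deduce $\mathcal{M}_G \cap \mathbb{K}[X] \subseteq \mathcal{M}_H$ via the Nakayama characterization of the universal Markov basis recalled in Section~\ref{Section2}, and, more importantly, that linear independence of the images of $\mathcal{G}$ modulo $\langle X \cup Y \rangle I_G$ descends to linear independence of the images of $\mathcal{G}_H$ modulo $\langle X \rangle I_H$; combined with generation, which you obtain for free from the Gr\"obner property (the paper instead derives generation from \cite{TT1}), graded Nakayama gives minimality. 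Your route buys self-containedness and generality --- it applies verbatim to any subconfiguration $A' \subseteq A$ of a toric configuration once the restriction-of-Gr\"obner-bases step is available, with no graph theory --- while the paper's citation is shorter given the existing literature. One small remark: once your independence argument is in place, the intermediate containment $\mathcal{G}_H \subseteq \mathcal{M}_H$ is logically redundant (independence plus generation already yield minimality, and membership in $\mathcal{M}_H$ then follows), though it is correct as stated.
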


\begin{proof} Since $G$ is an MG-graph, there exists a monomial order $\succeq$ such that the reduced Gr\"obner basis of $I_G$ with respect to $\succeq$ is a minimal set of generators of $I_G$. Let us denote this basis by $\mathcal G= \left\{B_{w_1}, \ldots, B_{w_s} \right\}$ where each $w_1,\ldots,w_s$ is an even closed walk of $G$. We are going to prove that \[ \mathcal G_H := \mathcal G \cap \mathbb K[x_i \mid x_i \in E(H)] = \{B_{w_i} \, \vert \, V(w_i) \subseteq V(H), 1\leq i \leq s\} \] is both a minimal set of generators of $I_H$ and a reduced Gr\"obner basis for the monomial order $\succeq$ restricted to $\mathbb K[x_i \mid x_i \in E(H)]$.

By \cite[Theorem 4.13]{TT1}, we have that the property of being a minimal generator for $B_w$ only depends on the induced subgraph of $G$ with vertex set $V(w)$, and then $\mathcal G_H$ generates $I_H$. Thus, $\mathcal G_H$ is a minimal set of generators of $I_H$. 

Moreover, since $H$ is an induced subgraph of $G$, the ideal $I_G \cap K[x_i \, \vert \, x_i \in V(H)]$ corresponds (up to isomorphism) a combinatorial pure subring, and hence $\mathcal G_H$ is the reduced Gr\"obner basis of $I_H$ with respect to the monomial order restricted to $\mathbb K[x_i \mid x_i \in E(H)]$ (see \cite[Proposition 1.1]{OHH}). For completeness, we include a short proof of this. Let $f=x^\alpha - x^{\beta}\in I_H \subseteq I_G$, and suppose that $\mathrm{in}_{\succeq}(f) = x^{\alpha}$. Then, there exists $j\in \{1, \ldots, s\}$ such that $\mathrm{in}_{\succeq}(B_{w_j})$ divides $x^{\alpha}$. 

This implies that all variables in $B_{w_j}$ belong to $\mathbb{K}[x_i \mid x_i \in E(H)]$, and thus $V(w_j) \subseteq V(H)$. Therefore, $B_{w_j} \in \mathcal{G}_H$, and it follows that the leading terms of $\mathcal{G}_H$ generate $\mathrm{in}_{\succeq}(I_H)$. Hence,  $\mathcal{G}_H$ is a Gr\"obner basis of $I_H$. Finally, since $\mathcal{G}_H \subseteq \mathcal{G}$ and $\mathcal{G}$ is reduced, it follows that $\mathcal{G}_H$ is also reduced.
\end{proof}

A graph is an MG-graph (respectively, a UMG-graph) if and only if all its connected components are. In particular, the disjoint union of MG-graphs (or UMG-graphs) is again an MG-graph (or UMG-graph). 

In what follows, we explore how the MG and UMG properties behave under 1- and 2-clique sums. A {\it 1-clique sum} of two graphs $G_1$ and $G_2$ is formed from their disjoint union by identifying a vertex $v_1$ of $G_1$ and a vertex $v_2$ of $G_2$ to form a single shared vertex $v$ of the new graph. Similarly, a {\it 2-clique sum} of two graphs $G_1$ and $G_2$ is formed from their disjoint union by identifying an edge $e_1$ of $G_1$ and an edge $e_2$ of $G_2$ to form a single shared edge $e$ of the new graph; see Figure \ref{fig:clique-sum}.

\begin{figure}[h!]
\includegraphics[scale=0.7]{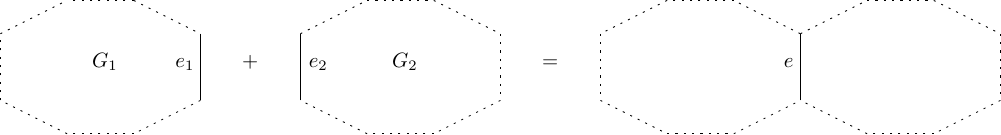}
\caption{A 2-clique sum of two graphs $G_1$ and $G_2$}
\label{fig:clique-sum}
\end{figure}

\begin{proposition}\label{pr:bipMG}
Let $G_1, G_2$ be two vertex-disjoint bipartite graphs, and let $K$ be a 1-clique sum of $G_1$ and $G_2$. Then $K$ is an {\rm MG}-graph if and only if both $G_1$ and $G_2$ are {\rm MG}-graphs.
\end{proposition}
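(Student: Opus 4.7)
The proof splits into the two implications, of which one is immediate and the other is the substance.

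For the forward implication, I would invoke Proposition~\ref{induced-subgraphsMG}. In a $1$-clique sum $K$, each $G_i$ is the subgraph of $K$ induced on the vertex set $V(G_i)$: edges of $G_{3-i}$ cannot have both endpoints in $V(G_i)$ since $V(G_1)\cap V(G_2)=\{v\}$ and the graphs are simple. So if $K$ is an MG-graph, then $G_1$ and $G_2$ are MG-graphs.

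For the converse, the idea is to combine witnessing Gr\"obner bases for $G_1$ and $G_2$ on disjoint variable sets into one for $K$. First I would establish that, modulo the trivial case where one of the $G_i$ has a single vertex, the shared vertex $v$ is a cut vertex of $K$; consequently any cycle of $K$ lies entirely inside $G_1$ or entirely inside $G_2$ (a cycle traversing both sides would have to pass through $v$ twice, which is impossible). This forces every cycle of $K$ to be even, so $K$ is bipartite, and it identifies the chordless cycles of $K$ with the disjoint union of the chordless cycles of $G_1$ and of $G_2$. By Proposition~\ref{pr:bipbases},
\[
\mathcal{M}_K \;=\; \mathcal{M}_{G_1}\cup \mathcal{M}_{G_2}
\]
as subsets of $\mathbb{K}[E(K)]=\mathbb{K}[E(G_1)\sqcup E(G_2)]$, and this is the unique minimal generating set of $I_K$.

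Next, I would pick monomial orders $\succeq_i$ on $\mathbb{K}[E(G_i)]$ for which the reduced Gr\"obner basis of $I_{G_i}$ equals $\mathcal{M}_{G_i}$; such orders exist because $G_i$ is MG and $\mathcal{M}_{G_i}$ is the unique minimal generating set of $I_{G_i}$. Since $E(G_1)$ and $E(G_2)$ are disjoint variable sets, one can combine $\succeq_1$ and $\succeq_2$ into a single monomial order $\succeq$ on $\mathbb{K}[E(K)]$ whose restriction to $\mathbb{K}[E(G_i)]$ is $\succeq_i$ (for instance, a block/elimination order). Under $\succeq$, the leading monomials of elements of $\mathcal{M}_{G_1}$ and of $\mathcal{M}_{G_2}$ involve disjoint variables and are therefore pairwise coprime. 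The standard Buchberger criterion for pairs with coprime leading terms then implies that all cross S-pairs reduce to zero, so $\mathcal{M}_{G_1}\cup \mathcal{M}_{G_2}$ is a Gr\"obner basis of the ideal it generates, which is exactly $I_K$. Since this basis coincides with $\mathcal{M}_K$, we conclude that $K$ is an MG-graph.

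The only step requiring any real work is the cut-vertex decomposition of (chordless) cycles in a $1$-clique sum; once that is in hand, every remaining ingredient has already been established earlier in the paper (Propositions~\ref{pr:bipbases} and~\ref{induced-subgraphsMG}) or is a standard Gr\"obner-basis fact, so I do not anticipate any substantial obstacle.
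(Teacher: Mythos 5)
Your proposal is correct and follows essentially the same route as the paper: the forward implication via Proposition~\ref{induced-subgraphsMG} applied to the induced subgraphs $G_1,G_2$, and the converse by confining every cycle of $K$ to one side of the clique sum, deducing $I_K=I_{G_1}+I_{G_2}$ from Proposition~\ref{pr:bipbases}, and combining the two witnessing orders into a product order on the disjoint variable sets. The only difference is cosmetic: you spell out the coprime-leading-terms Buchberger check that the paper compresses into the remark that $\mathcal{G}_1\cup\mathcal{G}_2$ is the reduced Gr\"obner basis because the variables are disjoint.
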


\begin{proof} 
\noindent
\textbf{($\Longrightarrow$)} Since $G_1$ and $G_2$ are induced subgraphs of $K$, the result follows from Proposition \ref{induced-subgraphsMG}.

\vspace{0.5em}
\noindent
\textbf{($\Longleftarrow$)} Consider $I_{G_1} \subseteq \mathbb K[x_1,\ldots,x_m]$ and $I_{G_2} \subseteq K[y_1,\ldots,y_{m'}]$. Since $G_1$ (respectively $G_2$) is an MG-graph, there exists a monomial order $\succeq_1$ (respectively $\succeq_2$) in the polynomial ring $\mathbb K[x_1,\ldots,x_{m}]$ (respectively $\mathbb K[y_1,\ldots,y_{m'}]$) such that the reduced Gr\"obner basis of $I_{G_1}$ (respectively $I_{G_2}$) with respect to $\succeq_1$ (respectively $\succeq_2$), which we call $\mathcal G_{1}$ (respectively $\mathcal G_{2}$), minimally generates $I_{G_1}$ (respectively $I_{G_2}$).

As $K$ is bipartite and every cycle in $K$ is entirely contained in either $G_1$ or $G_2$, it follows from Proposition \ref{pr:bipbases} that $I_K = I_{G_1} + I_{G_2}$. In particular, the union $\mathcal G := \mathcal G_1 \cup \mathcal G_2$  minimally generates $I_K$. 

Define a monomial order $\succeq$ on $\mathbb{K}[x_1, \ldots, x_m, y_1, \ldots, y_{m'}]$ as the product order of $\succeq_1$ and $\succeq_2$. Since the variables in $\mathcal{G}_1$ and  $\mathcal{G}_2$ are disjoint, then $\mathcal{G}$ is the reduced Gr\"obner basis of $I_K$ with respect to $\succeq$. Hence, $K$ is an MG-graph.
\end{proof}

As a direct consequence of the previous result, we obtain the following.
\begin{corollary}\label{cor:bipMG}
Let $G$ be a bipartite graph with biconnected blocks $B_1,\ldots,B_r$. Then $G$ is an {\rm MG}-graph if and only if $B_1,\ldots,B_r$ are {\rm MG}-graphs.
\end{corollary}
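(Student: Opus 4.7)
The plan is to prove the corollary by induction on the number $r$ of biconnected blocks, using Proposition~\ref{pr:bipMG} as the engine of the inductive step together with Proposition~\ref{induced-subgraphsMG} for the forward direction.

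For the forward direction, each block $B_i$ is by definition an induced subgraph of $G$, so Proposition~\ref{induced-subgraphsMG} immediately yields that if $G$ is an MG-graph then every $B_i$ is an MG-graph.

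For the reverse direction, I would proceed by induction on $r$, the case $r=1$ being tautological. Since the text preceding Proposition~\ref{pr:bipMG} already records that a graph is an MG-graph if and only if each of its connected components is, I may assume that $G$ is connected. Given $G$ connected with $r\geq 2$, I would invoke the standard structure of the block-cut tree of $G$: its vertices are the cut vertices and the blocks of $G$, and a cut vertex and a block are adjacent precisely when the former lies in the latter. This is a tree with at least one edge, so it has at least two leaves, and every such leaf is necessarily a block $B$ containing exactly one cut vertex $v$ of $G$. Let $G'$ be the subgraph of $G$ induced on $V(G)\setminus(V(B)\setminus\{v\})$. Then $G'$ is bipartite (as an induced subgraph of a bipartite graph), its biconnected blocks are precisely the remaining $r-1$ blocks of $G$ (each an MG-graph by hypothesis), and so the inductive hypothesis yields that $G'$ is an MG-graph. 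Moreover, $G$ is the $1$-clique sum of $G'$ and $B$ at the common vertex $v$, and so a direct application of Proposition~\ref{pr:bipMG} yields that $G$ is an MG-graph.

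I do not anticipate a substantive obstacle: the argument is essentially a clean induction that peels off one leaf block at a time. The only point requiring care is the classical block-cut tree fact that in a connected graph with at least two blocks one can always locate a block meeting the rest of the graph in a single cut vertex, and that deleting the non-cut vertices of such a block produces a graph whose biconnected blocks are exactly the remaining blocks of $G$; this is what licenses the application of Proposition~\ref{pr:bipMG} at each step of the induction.
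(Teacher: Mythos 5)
Your overall strategy coincides with the paper's: decompose $G$ via its block structure and apply Proposition~\ref{pr:bipMG} iteratively, with Proposition~\ref{induced-subgraphsMG} supplying the forward direction. However, your induction has a genuine flaw in how it treats blocks that are \emph{not} biconnected. Recall that, as defined in the paper, a block of $G$ may be a single vertex or a single edge (a bridge), while the corollary's list $B_1,\ldots,B_r$ contains only the \emph{biconnected} blocks. Your base case (``$r=1$ is tautological'') already fails in the presence of bridges: a cycle with one pendant edge has $r=1$ but $G\neq B_1$, so one must still argue that attaching the bridge preserves the MG-property. More seriously, in the inductive step the leaf of the block-cut tree you peel off may itself be a bridge, and you cannot always avoid this: for a cycle with pendant edges attached at two or more of its vertices, \emph{every} leaf of the block-cut tree is a bridge. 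When the peeled leaf $B$ is a bridge, your claim that the biconnected blocks of $G'$ are ``precisely the remaining $r-1$ blocks of $G$'' is false --- $G'$ still has all $r$ biconnected blocks --- so the induction on $r$ stalls and the inductive hypothesis cannot be invoked.

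The repair is small and is exactly what the paper's proof does: induct on the \emph{total} number of blocks (bridges and isolated vertices included), observing that the toric ideal of a non-biconnected block is the zero ideal and hence trivially satisfies the MG-property. Then Proposition~\ref{pr:bipMG} applies at every leaf-peeling step regardless of whether the peeled block is biconnected or a bridge, and the base case of a single block is either one of the $B_i$ or has zero toric ideal. With that change of induction measure your leaf-peeling argument becomes correct and is, in substance, the same proof as the paper's, which asserts the reconstruction of a connected graph as a sequence of 1-clique sums of its blocks without spelling out the block-cut-tree induction, but does explicitly dispose of the non-biconnected blocks.
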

\begin{proof} Every graph can be decomposed into its blocks via disjoint unions and 1-clique sums. If $G$ is not connected, the result follows from the fact that the disjoint union of MG-graphs is again an MG-graph. If $G$ is connected, then it can be reconstructed as a sequence of 1-clique sums of its biconnected blocks.

Since the toric ideal of a non-biconnected block (i.e., a single vertex or an edge) is the zero ideal, it trivially satisfies the MG-property. Therefore, the result follows from Proposition \ref{pr:bipMG}.
\end{proof}

\begin{proposition}\label{MG+UMG=MG}
Let $G_1, G_2$ be two vertex-disjoint graphs. If $G_1$ is an {\rm UMG}-graph, $G_2$ is an {\rm MG}-graph and at least one of them is bipartite; then the 1-clique sums and the 2-clique sums of $G_1$ and $G_2$ are {\rm MG}-graphs.
\end{proposition}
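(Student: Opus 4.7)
The plan is to exhibit a monomial order on $\mathbb{K}[E(K)]$ whose reduced Gr\"obner basis of $I_K$ coincides with a minimal generating set. Without loss of generality suppose that $G_1$ is the bipartite graph (the other case is symmetric). By Corollary~\ref{chordless-UMG} this forces $G_1$ to be chordless and $I_{G_1}$ to be robust, so the reduced Gr\"obner basis of $I_{G_1}$ with respect to \emph{any} monomial order on $\mathbb{K}[E(G_1)]$ is precisely $\mathcal{M}_{G_1}$. Since $G_2$ is an MG-graph, I fix a monomial order $\succeq_2$ on $\mathbb{K}[E(G_2)]$ whose reduced Gr\"obner basis of $I_{G_2}$ equals $\mathcal{M}_{G_2}$. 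The aim is to show that $\mathcal{G} := \mathcal{M}_{G_1} \cup \mathcal{M}_{G_2}$ is simultaneously a minimal generating set of $I_K$ and a reduced Gr\"obner basis with respect to a suitable block order on $\mathbb{K}[E(K)]$.

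The first key step is the ideal identity $I_K = I_{G_1} + I_{G_2}$. In the 1-clique sum case at a shared vertex $v$, every cycle of $K$ is entirely contained in $G_1$ or $G_2$, and a general even closed walk of $K$ decomposes at $v$ into sub-walks lying in $G_1$ or $G_2$; bipartiteness of $G_1$ forces the $G_1$-sub-walks all to be even, hence the total $G_2$-length is also even, and a telescoping expansion of $B_w$ places it in $I_{G_1} + I_{G_2}$. In the 2-clique sum case at the shared edge $e = \{u_1, u_2\}$, a mixed cycle $c$ through both $u_1$ and $u_2$ satisfies an identity of the form
\[
B_c \; = \; n\, B_{c_1} \; - \; m\, B_{c_2},
\]
where $c_1$ and $c_2$ are the cycles obtained from $c$ by closing each side through $e$ and $m, n$ are explicit monomials in the parity-indexed edges of $c$; this is extended to arbitrary even closed walks of $K$ by an analogous decomposition at the vertices $u_1, u_2$.

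Next, define $\succeq$ on $\mathbb{K}[E(K)]$ as the block order whose first block consists of the $E(G_2)$-variables ordered by $\succeq_2$ and whose second block consists of the remaining variables $E(G_1)\setminus E(G_2)$ ordered by an auxiliary monomial order $\succeq_1$. By robustness of $I_{G_1}$, the reduced Gr\"obner basis of $I_{G_1}$ under the restriction of $\succeq$ to $\mathbb{K}[E(G_1)]$ is again $\mathcal{M}_{G_1}$ regardless of $\succeq_1$, and $\mathcal{M}_{G_2}$ remains the reduced Gr\"obner basis of $I_{G_2}$. Buchberger's criterion then reduces to checking the mixed S-polynomials $S(f,g)$ with $f \in \mathcal{M}_{G_1}$ and $g \in \mathcal{M}_{G_2}$: in the 1-clique case the leading monomials live in disjoint variable sets and are therefore coprime, so the S-polynomials reduce trivially; in the 2-clique case one exploits the freedom in $\succeq_1$ to push $x_e$ out of the leading monomials of $\mathcal{M}_{G_1}$, so that the leading monomials again become coprime. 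Minimality of $\mathcal{G}$ is then immediate because the binomials in $\mathcal{M}_{G_i}$ involve only $E(G_i)$-variables. The main obstacle is the 2-clique sum: I need to verify that a suitable $\succeq_1$ keeps $x_e$ out of the leading monomials of every chordless cycle of $G_1$ that passes through $e$, and that this choice is compatible with both the block order and the ideal decomposition established in the previous step.
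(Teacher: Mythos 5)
Your overall strategy (split $I_K = I_{G_1}+I_{G_2}$, take the union of the two reduced Gr\"obner bases, and check Buchberger via coprime mixed leading terms) is the same as the paper's, but two steps in your execution genuinely fail. First, the reduction ``without loss of generality $G_1$ is the bipartite graph'' is not legitimate: the hypotheses are asymmetric ($G_1$ is UMG, $G_2$ is MG), so the case where only $G_2$ is bipartite is not obtained by relabelling. In that case $G_1$ may be a non-bipartite UMG-graph, and then Corollary~\ref{chordless-UMG} does not apply: by Theorem~\ref{thm:UMGgenrobust} you only get that $I_{G_1}$ is generalized robust, not robust, so its reduced Gr\"obner basis may depend on the order and need not equal a fixed set $\mathcal{M}_{G_1}$; there may even be several distinct minimal generating sets. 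Your argument leans on this order-independence (``the reduced Gr\"obner basis of $I_{G_1}$ with respect to any monomial order is precisely $\mathcal{M}_{G_1}$''), so it collapses exactly in the case the WLOG sweeps away. The fix, which is the paper's route, is to use UMG-ness directly: you are free to \emph{choose} the order on the $G_1$-side (the paper takes degree reverse lexicographic with the shared edge variable $e$ smallest), and UMG-ness guarantees that the resulting reduced Gr\"obner basis is a minimal generating set --- no robustness is needed, and bipartiteness is used only to invoke the splitting $I_K = I_{G_1}+I_{G_2}$ (for which the paper cites Favacchio--Hofscheier--Keiper--Van Tuyl and Gimenez--Srinivasan, rather than re-deriving it as you sketch).

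Second, your block order is built with the wrong dominant block, and the obstacle you flag at the end is genuinely unresolvable in your setup. You put the $E(G_2)$-variables, \emph{including} $e$, in the first (dominant) block. Take any primitive binomial $f = m_1 - m_2 \in I_{G_1}$ coming from a cycle through $e$, say $e \mid m_2$ and $e \nmid m_1$. Comparing first-block parts gives $e^a \succ 1$ for $a \geq 1$, so $m_2$ is forced to be the leading term of $f$ no matter how the auxiliary order $\succeq_1$ ranks the second-block variables --- the second block is never consulted. Hence no choice of $\succeq_1$ can ``push $x_e$ out of the leading monomials of $\mathcal{M}_{G_1}$,'' and the mixed leading terms fail to be coprime whenever an element of $\mathcal{G}_2$ also has $e$ in its leading term. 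The paper avoids this by making the $G_1$-variables (without $e$) the dominant block: with $\succeq_1$ the degrevlex order on $\mathbb{K}[E(G_1)]$ in which $e$ is the smallest variable, homogeneity and primitivity force the leading term of each element of $\mathcal{G}_1$ to lie in $\mathbb{K}[E(G_1)\setminus\{e\}]$, the product order with that block first preserves these leading terms, and then your coprimality argument for the mixed $S$-polynomials goes through verbatim. A minor additional slip: for a non-bipartite MG-graph $G_2$ the reduced Gr\"obner basis equals \emph{some} minimal generating set, not necessarily the universal Markov basis $\mathcal{M}_{G_2}$, so your notation overstates what the MG hypothesis provides.
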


\begin{proof} Since the case of 1-clique sums follows analogously from Proposition \ref{pr:bipMG}, we focus on the case of 2-clique sums. Let us denote by $K$ the $2$-clique sum of the graphs $G_1$ and $G_2$ along the edge $e = \{u,v\}$.

Consider $I_{G_1} \subseteq \mathbb K[x_1,\ldots,x_m,e]$ and $I_{G_2} \subseteq K[y_1,\ldots,y_{m'},e]$, where $x_i\neq y_j$ for all $i\in\{1,\ldots m\}$ and $j\in\{1,\ldots,m'\}$. Let $\succeq_1$ be the degree reverse lexicographic order on $\mathbb{K}[x_1, \ldots, x_m, e]$ with $x_1 > \cdots > x_m > e$. Since $G_1$ is a UMG-graph, the reduced Gr\"obner basis of $I_{G_1}$ with respect to this order, which we call $\mathcal G_{1}$, minimally generates $I_{G_1}$. 

As $G_2$ is an MG-graph, there exists a monomial order $\succeq_2$ on $\mathbb K[y_1,\ldots,y_{m'},e]$ such that the reduced Gr\"obner basis $\mathcal G_2$ of $I_{G_2}$ with respect to $\succeq_2$ minimally generates $I_{G_2}$.

Since at least one of the graphs $G_1$ or $G_2$ is bipartite, it follows from \cite[Corollary 4.8]{VanTuyl} or \cite[Theorem 3.4]{GS} that $I_{K} = I_{G_1} + I_{G_2}$. Therefore, $\mathcal G := \mathcal G_1 \cup \mathcal G_2$  minimally generates $I_K$.

Now define a monomial order $\succeq$ on $\mathbb{K}[x_1, \ldots, x_m, y_1, \ldots, y_{m'}, e]$ as the product order of the restriction of $\succeq_1$ to $\mathbb K[x_1,\ldots,x_m]$ and $\succeq_2$ on $K[y_1,\ldots,y_{m'},e]$.  We claim that $\mathcal G$ is the reduced Gr\"obner basis of $I_K$ with respect to $\succeq$. 

Observe that for any homogeneous polynomial $f$, the following hold:
\begin{itemize}
    \item If $f \in \mathbb K[x_1,\ldots,x_m,e]$, then ${\rm in}_\succeq(f) = {\rm in}_{\succeq_1}(f)$. Moreover, if $e$ does not divide $f$, then ${\rm in}_{\succeq_1}(f) \in \mathbb K[x_1,\ldots,x_m]$.
    \item If $f \in \mathbb{K}[y_1, \ldots, y_{m'}, e]$, then ${\rm in}_\succeq(f) = {\rm in}_{\succeq_2}(f) \in \mathbb K[y_1,\ldots,y_{m'},e]$. 
\end{itemize}

To show that $\mathcal{G}$ is a Gr\"obner basis of $I_K$, we verify that the $S$-polynomial of any two distinct elements in $\mathcal{G}$ reduces to zero modulo $\mathcal{G}$, denoted $fSg \rightarrow_{\mathcal{G}} 0$. We distinguish three cases:

\begin{itemize}
    \item If $f, g \in \mathcal{G}_1$, then the reduction follows because $fSg \rightarrow_{\mathcal{G}_1} 0$, and $\mathrm{in}_{\succeq}(h) = \mathrm{in}_{\succeq_1}(h)$ for all $h \in \mathcal{G}_1$.
    
    \item If $f, g \in \mathcal{G}_2$, then $fSg \rightarrow_{\mathcal{G}_2} 0$, and $\mathrm{in}_{\succeq}(h) = \mathrm{in}_{\succeq_2}(h)$ for all $h \in \mathcal{G}_2$.
    
    \item If $f \in \mathcal{G}_1$ and $g \in \mathcal{G}_2$, then $\mathrm{in}_{\succeq}(f) \in \mathbb{K}[x_1, \ldots, x_m]$ and $\mathrm{in}_{\succeq}(g) \in \mathbb{K}[y_1, \ldots, y_{m'}, e]$, so their initial terms are relatively prime. Thus, $fSg \rightarrow_{\mathcal{G}} 0$.
\end{itemize}

Hence, $\mathcal G$ is a Gr\"obner basis of $I_K$  (see, e.g., \cite[Chapter 2, $\S$9]{CLO}) and since it is also minimal, $K$ is an MG-graph.
\end{proof}

Observe that if $G_1$ and $G_2$ are both bipartite UMG-graphs, then they are chordless, and so is their $1$-clique sum. Consequently, their 1-clique sum is also a UMG-graph by Corollary \ref{chordless-UMG}). However, their $2$-clique sum is not necessarily a UMG-graph. For instance, if $G_1$ and $G_2$ are both four-cycles, their corresponding toric ideals are principal and thus UMG-ideals. Nevertheless, any $2$-clique sum of such graphs yields a bipartite graph consisting of a six-cycle with a chord. Therefore, by Corollary \ref{chordless-UMG}, these graphs are not UMG-graphs.

We conclude this section by proving that toric ideals of bipartite graphs that are complete intersections are also MG-ideals. In other words, every complete intersection toric ideal coming from a bipartite graph has a complete intersection initial ideal. We obtain this result as an application of Proposition \ref{MG+UMG=MG} and the results of Gitler, Reyes and Villarreal in \cite{GRV}. In that work, the authors introduce the notion of a \emph{ring graph}: a graph $G$ is a ring graph if and only if all its biconnected blocks can be constructed by performing successive $2$-clique sums of cycles (see Figure \ref{fig:ringraphbip} for an example). In \cite[Corollary 3.4]{GRV}, they proved that for a bipartite graph $G$, the toric ideal $I_G$ is a complete intersection if and only if $G$ is a ring graph. Moreover, the toric ideal $I_c$ of an even cycle $c$ is a principal ideal, and thus it is a bipartite UMG-graph. Hence, iteratively applying Proposition \ref{MG+UMG=MG} we have that every block of a bipartite ring graph is an MG-graph. Finally, by Proposition \ref{pr:bipMG} we derive the following:

\begin{figure}[h]
\begin{center}
\includegraphics{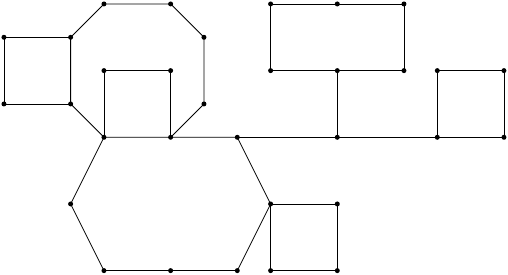}
\caption{Example of bipartite ring graph.}
\label{fig:ringraphbip}
\end{center}
\end{figure}

\begin{corollary}\label{ci-MG ideal}
Let $G$ be a bipartite graph. If $I_G$ is a complete intersection toric ideal, then it is an {\rm MG}-ideal.
\end{corollary}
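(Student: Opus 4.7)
The plan is to combine the structural characterization of bipartite complete intersection toric ideals from \cite{GRV} with the stability results for the MG-property under clique sums established in Propositions \ref{pr:bipMG} and \ref{MG+UMG=MG}.

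First, I would invoke \cite[Corollary 3.4]{GRV}: since $G$ is bipartite and $I_G$ is a complete intersection, $G$ must be a bipartite ring graph. By the definition of ring graph, each biconnected block of $G$ is obtained from a single cycle through a finite sequence of $2$-clique sums with further cycles; because $G$ is bipartite, all these cycles are even.

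By Corollary \ref{cor:bipMG}, it suffices to prove that every biconnected block $B$ of $G$ is an MG-graph. To this end, write $B$ as the outcome of a construction that begins with an even cycle $c_1$ and successively forms the $2$-clique sum with even cycles $c_2, c_3, \ldots, c_s$; denote the intermediate graph after $k$ steps by $H_k$, so $H_1 = c_1$ and $H_s = B$. I would then proceed by induction on $k$. The base case $k = 1$ is immediate because the toric ideal of an even cycle is principal, hence a UMG-ideal and in particular an MG-ideal. For the inductive step, assume $H_k$ is a bipartite MG-graph; since $c_{k+1}$ is a bipartite UMG-graph, Proposition \ref{MG+UMG=MG} applied with the UMG-graph $c_{k+1}$ and the MG-graph $H_k$ yields that their $2$-clique sum $H_{k+1}$ is an MG-graph.

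The argument is essentially an assembly of prior results, so there is no substantial obstacle. The one point I would check at every stage is that the hypotheses of Proposition \ref{MG+UMG=MG} are satisfied, namely that every intermediate $H_k$ remains bipartite; this holds automatically since $H_k$ is a subgraph of the bipartite graph $G$, and that the newly attached cycle is bipartite and UMG, which follows from its toric ideal being principal. Once each biconnected block is shown to be an MG-graph, Corollary \ref{cor:bipMG} assembles them to conclude that $G$ itself is an MG-graph.
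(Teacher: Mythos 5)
Your proposal is correct and follows essentially the same route as the paper: both invoke \cite[Corollary 3.4]{GRV} to identify $G$ as a bipartite ring graph, observe that an even cycle has a principal (hence UMG) toric ideal, iterate Proposition \ref{MG+UMG=MG} along the $2$-clique-sum construction of each biconnected block, and finally assemble the blocks via Proposition \ref{pr:bipMG} (your appeal to Corollary \ref{cor:bipMG} is the same step). The only difference is that you spell out the paper's phrase ``iteratively applying Proposition \ref{MG+UMG=MG}'' as an explicit induction, which is a presentational refinement rather than a different argument.
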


The above result answers \cite[Open Problem 5.2]{Nacho-T1} for the toric ideals of bipartite graphs.

\section{Toric ideals of graphs generated in one degree}\label{Section5}

This section is devoted to generalizing the following result due to Ohsugi and Hibi:

\begin{thm1}\label{Koszul-bipartite}\cite{Ohsugi} Let $G$ be a bipartite graph such that the toric ideal $I_G$ is generated by quadrics. Then $I_G$ is an {\rm MG}-ideal.
\end{thm1}

Extending the above result, we prove the following: 

\begin{thm1}\label{Grobner-thetas}
Let $G$ be a bipartite graph such that all minimal generators of $I_G$ have the same degree. Then $I_G$ is an {\rm MG}-ideal. 
\end{thm1}

As we mention in Proposition \ref{pr:bipbases}, the toric ideal $I_G$ of a bipartite graph $G$ has a unique minimal binomial generating set. Moreover, its minimal binomial generators of degree $k$ are in bijection with even chordless cycles of length $2k$. Hence, all minimal generators of $I_G$ have the same degree $k$ if and only if all chordless cycles of $G$ have length $2k$. Taking into account these considerations, Theorem \ref{Grobner-thetas} can be restated as: Let $G$ be a graph such that every chordless cycle has length $2k$ (with $k \geq 2$), then $G$ is an MG-graph. 

To prove this result, we first provide in Theorem \ref{construction-thetas} a structural description of the graphs such that every chordless cycle has length $2k$ with $k \geq 3$. When $k = 2$ these graphs are known as bipartite chordal graphs and have been characterized in many ways (see, e.g., \cite{Huang}).

Firstly, we introduce the notion of the $\varTheta_r^k$ graph, which consists of two non-adjacent vertices and $r$ disjoint paths of length $k$ joining them. More precisely:

\begin{definition}\label{varTheta}
Let $r,k \geq 2$, we define $\varTheta_r^k$ as the graph consisting on $2$ vertices joined by $r$ vertex disjoint paths of length $k$. In other words, $\varTheta_r^k$ is the graph on the vertex set 
$$V(\varTheta_r^k)=\big\{ u_0,u_k \big\} \cup \big\{u_{i,j}\ : \ 1\leq i\leq r,\ 1\leq j\leq k-1\big\}$$ and on the edge set
$$E(\varTheta_r^k)=\big\{ \{u_0,u_{i,1}\},\ \{u_{i,k-1},u_k\},\ \{u_{i,j}, u_{i,j+1}\}\ : \ 1\leq i\leq r,\ 1\leq j\leq k-2 \big\}.$$ The vertices $u_0$ and $u_k$ are called the base points of the graph $\varTheta_r^k$.
\end{definition}

For all $r,k \geq 2$, the $\varTheta_r^k$ graph is a chordless graph where all cycles have length $2k$. In particular, a $\varTheta_2^k$ graph is a chordless cycle of length $2k$ . In Figure \ref{thetagraph}, we present the graph $\varTheta_4^5$. 

\begin{figure}[h]
\begin{center}
\includegraphics{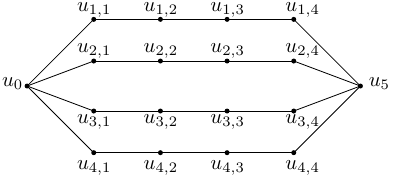}
\caption{The $\varTheta_4^5$ graph.}
\label{thetagraph}
\end{center}
\end{figure}

Note that by Proposition \ref{chordless-UMG} it follows that a graph $\varTheta_r^k$ is a UMG-graph for any $r,k\geq 2$. For the proof of Theorem \ref{construction-thetas}, we repeatedly use the following lemmas.

\begin{lemma} 
\label{lemma_cycles}
Let $G$ be a graph whose all chordless cycles have length $g$ and let $c$ be a cycle of $G$ of length $\ell$. Then  $$\ell\equiv2\mod(g-2).$$
\end{lemma}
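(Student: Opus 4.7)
The plan is to proceed by induction on the length $\ell$ of the cycle $c$, using the fact that a chord in a cycle splits it into two shorter cycles, and that the total edge count obeys a clean additive identity. The key preliminary observation is that in such a graph $G$, \emph{every} cycle has length $\geq g$: indeed, if a cycle $c'$ of length $<g$ existed, since it cannot be chordless (all chordless cycles have length exactly $g$), it would contain a chord, which would split it into two even shorter cycles; iterating this argument would eventually produce a chordless cycle of length strictly less than $g$, contradicting the hypothesis. So we always have $\ell \geq g$.

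For the base case $\ell = g$, the cycle $c$ has to be chordless: if it carried a chord, the chord would split $c$ into two cycles of length strictly less than $g$, contradicting the observation above. Hence $\ell = g = (g-2)+2 \equiv 2 \pmod{g-2}$.

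For the inductive step, assume $\ell > g$ and the statement holds for all cycles of length smaller than $\ell$. Since $c$ is not chordless (chordless cycles have length $g$), there is a chord $e$ of $c$ joining two non-adjacent vertices of $c$. Together with the two arcs of $c$, the chord $e$ yields two cycles $c_1, c_2$ of lengths $\ell_1, \ell_2$ satisfying $\ell_1, \ell_2 < \ell$ and $\ell_1 + \ell_2 = \ell + 2$ (the chord is counted once in each $c_i$). By the induction hypothesis applied to $c_1$ and $c_2$, we have $\ell_1 \equiv \ell_2 \equiv 2 \pmod{g-2}$, and therefore
\[
\ell + 2 \;=\; \ell_1 + \ell_2 \;\equiv\; 4 \pmod{g-2},
\]
which gives $\ell \equiv 2 \pmod{g-2}$, as required.

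There is no substantial obstacle: the only subtlety is the preliminary lower bound $\ell \geq g$, which rules out the possibility that the induction stalls at cycles shorter than $g$. Once that is established, the additive relation $\ell_1+\ell_2=\ell+2$ on chord-splits does all the work, and the induction closes in two lines.
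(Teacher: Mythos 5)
Your proof is correct and follows essentially the same route as the paper: induction on the cycle length, splitting along a chord into cycles $c_1,c_2$ with $\ell_1+\ell_2=\ell+2$ (the paper writes this as $\ell=(\ell_1-1)+(\ell_2-1)$), and adding the two congruences. The only difference is that you spell out the preliminary bound $\ell\geq g$ (equivalently, that a shortest cycle is chordless), which the paper leaves implicit in its base case ``if $\ell\leq g$, then $\ell=g$''.
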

\begin{proof}
Considering $c$ to be a cycle of $G$, we use induction on the length $\ell$ of $c$. If $\ell \leq g$, then $\ell = g$ and $\ell \equiv 2 \mod (g-2)$.  
 
 Now suppose that the cycle $c$ is of length $\ell > g$. Then $c$ is not chordless and there exists (at least) a chord that separates $c$ in two cycles $c_1$ and $c_2$ of lengths $\ell_1, \ell_2 < \ell$. By the induction hypothesis, we have that
$$\begin{array}{cc}
\ell_1 \equiv 2 \mod (g-2), &
\ell_2 \equiv 2 \mod (g-2).\end{array}$$
Since $\ell  = (\ell_1 -1) + (\ell_2-1)$, it follows
$$\ell = \ell_1 + \ell_2 - 2 \equiv 2 \mod (g-2),$$
 which proves the claim.
\end{proof}

\begin{lemma}
\label{lemma-Menger}
Let $G$ be a graph whose all chordless cycles have length $g \geq 5$. Let $H$ be a biconnected induced subgraph of $G$. If $v\in V(G)\setminus V(H)$, then $|N(v)\cap V(H)|\leq 1$.
\end{lemma}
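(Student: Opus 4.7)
The plan is to argue by contradiction: assume there exist two distinct vertices $u_1,u_2\in N(v)\cap V(H)$, and derive a divisibility contradiction by applying Lemma~\ref{lemma_cycles} to three carefully chosen cycles. First I would observe that $\{u_1,u_2\}\notin E(G)$, because otherwise $v,u_1,u_2$ would form a triangle, which is a chordless cycle of length $3$ in $G$, contradicting the hypothesis $g\geq 5$. Hence $u_1$ and $u_2$ are non-adjacent in the biconnected (and thus $2$-connected, with at least three vertices) induced subgraph $H$.

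Next, I would invoke Menger's theorem inside $H$ to produce two internally vertex-disjoint paths $R_1,R_2$ from $u_1$ to $u_2$, of lengths $r_1,r_2$ respectively. Their union is a cycle $C^*\subseteq H\subseteq G$ of length $r_1+r_2$. Since $v\notin V(H)$, each $R_i$ together with the edges $\{v,u_1\}$ and $\{v,u_2\}$ also yields a cycle $C_i\subseteq G$ of length $r_i+2$. Applying Lemma~\ref{lemma_cycles} to each of the three cycles $C^*, C_1, C_2$ would then give
\[
r_1+r_2\equiv 2,\qquad r_1+2\equiv 2,\qquad r_2+2\equiv 2 \pmod{g-2}.
\]
The last two congruences force $r_1\equiv r_2\equiv 0\pmod{g-2}$, so $r_1+r_2\equiv 0\pmod{g-2}$; combining with $r_1+r_2\equiv 2$ yields $(g-2)\mid 2$, which is impossible since $g\geq 5$ implies $g-2\geq 3$.

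I do not foresee any serious obstacle here: once the first step has ruled out the triangle configuration, Menger's theorem gives the three cycles immediately and Lemma~\ref{lemma_cycles} does the rest. The only conceptual point is noticing that Lemma~\ref{lemma_cycles} applies not only to the cycle lying inside $H$ but also to the two cycles formed by attaching $v$, and that these three modular conditions are jointly incompatible precisely when $g\geq 5$.
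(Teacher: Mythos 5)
Your proof is correct and follows essentially the same route as the paper's: assume two neighbours of $v$ in $H$, apply Menger's theorem inside the biconnected subgraph $H$ to get two internally disjoint paths, form the three cycles, and derive $(g-2)\mid 2$ from the three congruences of Lemma~\ref{lemma_cycles}. Your preliminary step ruling out the edge $\{u_1,u_2\}$ is harmless but unnecessary, since Lemma~\ref{lemma_cycles} applies to triangles as well (a triangle is chordless, so none exist when $g\geq 5$, and the congruence argument goes through regardless of the path lengths).
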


\begin{proof}
Assume by contradiction that there exists a vertex $v \in V(G) \setminus V(H)$ such that $|N(v)\cap V(H)|\geq 2$. Then, we take $u,w\in V(H)$ two adjacent vertices to $v$ in $H$. Since $H$ is biconnected, according to Menger's Theorem there exist two disjoint paths in $H$ connecting $u$ and $w$. We denote by $\ell_1$ and $\ell_2$ the lengths of these paths (see Figure \ref{fig:menger}). 

\begin{figure}[h!]
\includegraphics[scale=0.7]{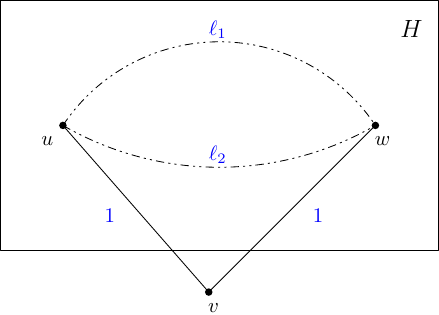}
\caption{The three paths between $u,w$ in Lemma \ref{lemma-Menger}.}
\label{fig:menger}
\end{figure}

Thus, there are three cycles in $G$ of lengths: $\ell_1+2$, $\ell_2+2$, and $\ell_1+\ell_2$, respectively. Applying Lemma \ref{lemma_cycles} we get that:$$ \begin{array}{rcc} \ell_1+2 & \equiv 2 \mod (g-2) &  (^{*}) \\ \ell_2+2 & \equiv 2 \mod (g-2) &  (^{**})\\ \ell_1+\ell_2 & \equiv 2 \mod (g-2) &  (^{***}) \end{array} $$
Combining $(^*)+(^{**})-(^{***})$ we get $4 \equiv 2 \mod (g-2)$, a contradiction.
\end{proof}

\begin{lemma}
\label{lemma-primitivecycles}
Let $G$ be a $2$-clique sum of two graphs $H$ and $K$. Then every cycle that passes through a vertex $u \in V(H) - V(K)$ and a vertex $v \in V(K) - V(H)$ is not chordless.
\end{lemma}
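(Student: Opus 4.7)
My plan is to exploit the fact that the vertex set of the shared edge acts as a vertex separator between the "private" parts of $H$ and $K$. Let $e = \{a,b\}$ denote the edge along which $H$ and $K$ are glued, so that $V(H) \cap V(K) = \{a,b\}$ and $E(H) \cap E(K) = \{e\}$. Because every edge of $G$ lies either in $E(H)$ or in $E(K)$, removing $\{a,b\}$ from $G$ disconnects $V(H)\setminus\{a,b\}$ from $V(K)\setminus\{a,b\}$. Consequently, any walk in $G$ from a vertex of $V(H)\setminus V(K)$ to a vertex of $V(K)\setminus V(H)$ must pass through $a$ or $b$.

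Now let $c = (z_0, z_1, \ldots, z_{\ell-1}, z_\ell = z_0)$ be a cycle passing through some $u \in V(H)\setminus V(K)$ and some $v \in V(K)\setminus V(H)$. The cycle restricted to its two arcs from $u$ to $v$ yields two internally disjoint paths in $G$ between $u$ and $v$. By the separator argument above, each such path must contain a vertex of $\{a,b\}$. Since a cycle visits every vertex at most once and $\{a,b\}$ has only two elements, one arc of $c$ contains $a$ (but not $b$) and the other arc contains $b$ (but not $a$). In particular, both $a$ and $b$ appear in $c$.

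I claim that the edge $\{a,b\}$ is a chord of $c$. Indeed, in the cycle $c$ the vertices $a$ and $b$ lie on opposite sides of $u$ (and also of $v$): the arc from $a$ to $b$ that contains $u$ has length at least $2$ (it passes through $u \neq a,b$), and similarly the arc from $a$ to $b$ that contains $v$ has length at least $2$. Hence $a$ and $b$ are non-adjacent vertices of the cycle $c$, while $\{a,b\} \in E(G)$ is an edge of $G$. Thus $\{a,b\}$ is a chord of $c$, so $c$ is not chordless, as desired.

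The main conceptual step is the separator argument in the first paragraph; there is no real obstacle, since the definition of a $2$-clique sum immediately gives that $\{a,b\}$ is a vertex cut, and the rest of the argument is a direct inspection of where the chord $\{a,b\}$ sits in the cycle.
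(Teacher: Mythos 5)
Your proof is correct and follows essentially the same route as the paper's: the endpoints $\{a,b\}$ of the identifying edge separate $V(H)\setminus V(K)$ from $V(K)\setminus V(H)$, so the cycle must pass through both $a$ and $b$, making $e=\{a,b\}$ a chord. You are in fact slightly more careful than the paper, since you explicitly verify that $a$ and $b$ are non-consecutive on the cycle (each arc between them contains $u$ or $v$), ruling out the degenerate possibility that $e$ is an edge of the cycle rather than a chord --- a detail the paper leaves implicit.
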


\begin{proof}
Assume that $G$ is the $2$-clique sum of $H$ and $K$ along the edge $e=\{w_1, w_2\}$. Any cycle passing through $u\in V(H) - V(K)$ and $v\in V(K)- V(H)$, necessarily passes through $w_1$ and $w_2$. Thus, $e$ is a chord of such a cycle and the cycle is not chordless.
\end{proof}

Next, we prove the structure of graphs whose all chordless cycles are of length $2k$ where $k\geq 3$. 

\begin{thm1}\label{construction-thetas} Let $k \geq 3$. All chordless cycles of a graph $G$ have length $2k$ if and only if all  biconnected blocks of $G$ are a 2-clique sum of $\varTheta_r^k$ graphs.
\end{thm1}

\begin{proof}It suffices to prove the result for biconnected graphs. 

\noindent $(\Longrightarrow)$ Assume that $G$ is a biconnected graph and that all its chordless cycles have length $2k$ with $k\geq 3$. Consider one of the largest (i.e., with most vertices) induced subgraph $H$ of $G$, which is a $2$-clique sum of $\varTheta_r^k$ graphs. Note that $H$ is not the empty graph because $G$ is biconnected, and thus there exists at least one chordless cycle in $G$ (or equivalently a $\varTheta_2^{k}$ graph). We aim to prove that $H = G$.

By contradiction, assume that $H \neq G$. Since $G$ is biconnected, there exists a path $\mathcal P$ of length $\ell \geq 2$ connecting two vertices $u, v \in V(H)$ such that $V(\mathcal P)  \cap V(H) = \{u,v\}$. Among all these paths, we choose the smallest one and let it be $\mathcal P = (w_0 = u, w_1, \ldots, w_{\ell-1}, w_{\ell} = v)$.  By Lemma \ref{lemma-Menger} we have $\ell \geq 3$.

\noindent {\it Claim:} Consider the induced subgraph $G'$ with vertices $V(\mathcal P) \cup V(H)$. Then,  
   $G'$ is a $2$-clique sum of $\varTheta_r^k$ graphs, for some $r\geq 2,k\geq 3$.

\begin{figure}[h]
\begin{center}
    \includegraphics[scale=.9]{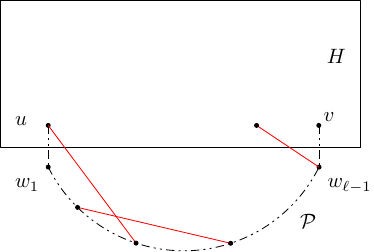}
\end{center}
\caption{Graph $G'$ in the proof of Theorem \ref{construction-thetas}. By the choice of $\mathcal P$ and Lemma \ref{lemma-Menger}, edges depicted in red cannot occur.}
\label{fig:Teo1}
\end{figure}

We first remark that $E(G') = E(\mathcal P) \cup E(H)$ (see Figure \ref{fig:Teo1}). Indeed, by the minimality of $\mathcal P$, one has that:
\begin{itemize}
\item[$\circ$] $\{x,w_i\} \notin E(G')$ with $2 \leq i \leq \ell-2$ and $x \in V(H)$, 
\item[$\circ$] $\{w_i,w_j\} \in E(G')$ with $0 \leq i < j \leq \ell$ if and only if $j = i+1$;
\end{itemize}
and, by Lemma \ref{lemma-Menger}, one has that:
\begin{itemize}
\item[$\circ$] $\{x,w_1\} \in E(G')$ with $x \in V(H)$ if and only if $x = u$, and 
\item[$\circ$] $\{x,w_{\ell-1}\} \in E(G')$ with $x \in V(H)$ if and only if $x = v$.
\end{itemize}

Now, since $u,v\in V(H)$ and the graph $H$ is biconnected, according to Menger's Theorem, there are at least two vertex disjoint paths in $H$ between $u$ and $v$. Among all these pairs of paths, we choose $\mathcal P_1$ of length $\ell_1$, and $\mathcal P_2$ of length $\ell_2$  such that $(\ell_1,\ell_2) \in \N^2$ is the smallest possible in lexicographical order.

Let $c_1 = (\mathcal{P}, -\mathcal{P}_1)$; that is, $c_1$ is the cycle obtained by concatenating $\mathcal{P}$ with~$- \mathcal P_1$, the reverse of the path $\mathcal{P}_1$. By the minimality of $\ell_1$ and the fact that $E(G') = E(\mathcal P) \cup E(H)$, we have that the cycle $c_1$ is chordless. Then, by hypothesis 
\[
    \ell+\ell_1 = 2k \ \ (^*) 
\]
Also, the cycles $c_2=\big( \mathcal P_1, - \mathcal P_2\big)$ and $c_3=\big( \mathcal P, - \mathcal P_2\big)$, of lengths $\ell_1+\ell_2$ and $\ell+\ell_2$ respectively, pass through $u$ and $v$ (see Figure \ref{fig:menger2}). By Lemma \ref{lemma_cycles} we have that
\[ \begin{array}{rcl}
\ell_1 + \ell_2 & \equiv 2 \mod (2k-2) &  (^{**})  \\
\ell + \ell_2 & \equiv 2 \mod (2k-2) &  (^{***})  
\end{array}\]

\begin{figure}[h!]
\includegraphics[scale=0.9]{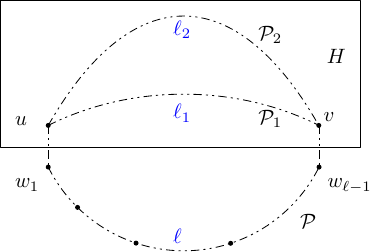}
\caption{Three paths connecting $u$ and $v$.}
\label{fig:menger2}
\end{figure}

Combining $(^{*}) - (^{**}) + (^{***})$ we get $2\ell  \equiv 2 \mod (2k-2)$. Since $\ell \leq 2k$, we have $\ell\in \{k, 2k-1\}$. We study these two cases separately.

\underline{\textbf{Case $\ell=2k-1$}}: By $(^{*})$ we get $\ell_1 =1$. Thus, $G'$ is a $2$-clique sum of $\varTheta_r^k$ graphs$;$ the graph $H$ which is a $2$-clique sum of $\varTheta_r^k$ graphs by hypothesis and the chordless cycle $c_1$ which is a $\varTheta_2^k$ graph. Thus, {\it Claim} is proved.

\underline{\textbf{Case $\ell=k$}}: By $(^{*})$ we get $\ell_1=k$. Let us see that $c_3$ is a chordless cycle. Indeed,\begin{itemize}
    \item[$\circ$] $\{u,v\} \notin E(G')$ because $\ell_1 > 1$, 
    \item[$\circ$] $c_3$ has no chords with both endpoints in $\mathcal P_2$, by the minimality of $\ell_2$, and
    \item[$\circ$] there are no other chords of $c_3$ because $E(G') = E(\mathcal P) \cup E(H).$
\end{itemize} 
Since $c_3$ has length $\ell+\ell_2$ and is chordless, then $\ell=\ell_1=\ell_2=k$. 

Now, using Lemma \ref{lemma-primitivecycles}, we have that $u$ and $v$ belong to the same component $K = \varTheta_r^k$. Otherwise, the cycle $c_2$ of length $\ell_1 + \ell_2 = 2k$ passing through $u$ and $v$ would not be chordless, a contradiction. To prove the {\it Claim} we are going to justify that the induced subgraph $K'$ with vertices $V(K') = V(K) \cup V(\mathcal P)$ is a $\varTheta_{r+1}^k$.

If $r=2$, then $K$ is just a cycle. Hence, $K'$ is a $\varTheta_3^k$ with base points the vertices $u,v$ (since $\ell+1=\ell_1=\ell_2=k$), and the {\it Claim} follows. Assume now that $r \geq 3$. Let us show that $u,v$ are the base points of $K$. Let $t$ be the distance between $v$ and the closest base point of $K$, which we denote by $w$; then $0 \leq t \leq k/2$. Let now $t'$ be the distance between $u'$ and the other base point of $K$, which we denote by $z$; then, $0 \leq t' < k$. 

\begin{figure}[h!]
\includegraphics[scale=.75]{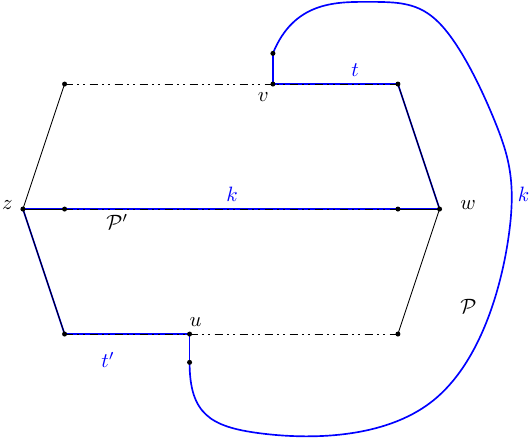}
\caption{Case $\ell=k$ in the proof of Theorem \ref{construction-thetas}. The chordless cycle $c$ of length $2k+t+t'$ described in the proof is depicted in blue.}
\label{Teo2}
\end{figure}

Since $r \geq 3$ there is a path $\mathcal P'$ of length $k$ between $w$ and $z$ that does not pass $u$ or $v$. Consider now $c$ the cycle consisting of the path $\mathcal P$, the shortest path in $K$ between $v$ and $w$ (of length $t\leq k/2$), let it be $\mathcal P'$, and the shortest path in $K$ between the vertex $z$ and $u$ (see Figure \ref{Teo2}). The length of $c$ is $2k+t+t'$ and, by Lemma \ref{lemma_cycles},
\[ 2 \equiv 2k + t + t' \equiv 2 + t + t' \mod (2k-2) ~~ \hbox{ with }t \leq k/2,\, t' < k.\]
 Thus, $t=t'=0$ or, equivalently, $u$ and $v$ are the base points of $K$. As a consequence, $K'$ is a $\varTheta_{r+1}^k$ graph and the {\it Claim} follows.

Thus, we conclude that $H = G$ and, hence, $G$ is a $2$-clique sum of $\varTheta_r^k$ graphs. 

\noindent $(\Longleftarrow)$ Conversely, consider $G$ a $2$-clique sum of $\varTheta_r^k$ graphs and let $c'$ be a chordless cycle of $G$. Since $c'$ is chordless, by Lemma \ref{lemma-primitivecycles}, it is contained in a $\varTheta_r^k$. The result follows from the fact that all the cycles in a $\varTheta_r^k$ have length $2k$ (Theorem \ref{construction-thetas}).

\end{proof}

Using the previous theorem, we can prove the main result of this manuscript.

\begin{proof}[Proof of Theorem \ref{Grobner-thetas}]
Let $G$ be a bipartite graph such that all minimal generators of $I_G$ have the same degree $k \geq 2$.
If $k = 2$ the result follows from Theorem \ref{Koszul-bipartite}.  If $k \geq 3$, then all chordless cycles of $G$ have length $2k$. 
Since $\varTheta_r^k$ is chordless for any $r,k\geq 2$, then $\varTheta_r^k$ is a UMG-graph by Corollary \ref{chordless-UMG}. The result follows from Proposition \ref{pr:bipMG}, Proposition \ref{MG+UMG=MG}, and Theorem \ref{construction-thetas}.
\end{proof}

When $G$ is bipartite, we have that all minimal generators of $I_G$ have the same degree if and only if all chordless cycles have the same length. If $G$ is nonbipartite, then the previous equivalence is no longer true. This suggests two ways of trying to extend Theorem \ref{Grobner-thetas} for nonbipartite graphs:
\begin{itemize}
    \item[(a)] If all chordless cycles of $G$ have the same length, then $G$ is an MG-graph.
    \item[(b)] If all the generators of $I_G$ have the same degree, then $G$ is an MG-graph.
\end{itemize}
We show examples justifying that none of these possible generalizations holds. 
In \cite[Example 2.1]{OhsugiHibi} the authors show a graph such that $I_G$ is generated by quadrics and has no quadratic Gr\"obner basis and therefore it is not an MG-graph. As a consequence, (b) does not hold. In Example \ref{ex:nobip1} we provide a counterexample to statement (a). Furthermore, in Example \ref{ex:nobip2} we exhibit a graph $G$ whose chordless cycles have all the same length, all the generators of $I_G$ have the same degree and $G$ is not an MG-graph. Both examples are chordal graphs, that is, graphs whose all chordless cycles have length $3$.

\begin{example} \label{ex:nobip1} {\rm On the left part of Figure \ref{fig:biconchordalnoquadnomg} we present a nonbipartite and biconnected chordal graph $G$ (graph number 35684 in {\tt houseofgraphs.org} \cite{housegraphs}). The corresponding toric ideal is minimally generated by nine generators; eight quadrics and one cubic:

\[ \begin{array}{lllcccc}
I_G & = &\langle & x_9 x_{11}- x_8 x_{12}, & x_1 x_{11}- x_2 x_{12}, &  x_7 x_{10} - x_8 x_{12}, \\ & &  & x_6 x_{10} - x_5 x_{12}, & x_3 x_{10} - x_4 x_{11},  & x_1 x_8 - x_2 x_9, \\ & & & x_5 x_7 - x_6 x_8, &  x_4 x_7 - x_3 x_9, &  x_1 x_3 x_5 - x_2 x_4 x_6 & \rangle.  

\end{array} \]

\begin{figure}[h!] 
\includegraphics[scale=0.65]{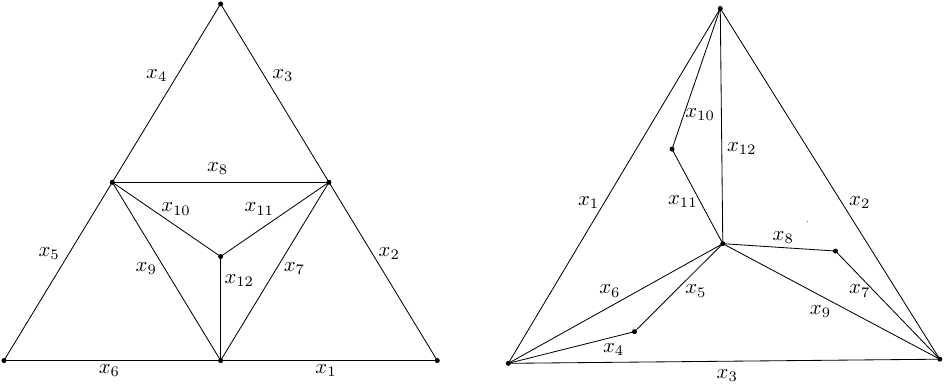} 
\caption{Biconnected chordal graphs that are not MG-graphs. The graph on the left corresponds to Example \ref{ex:nobip1}, its toric ideal is minimally generated by 8 quadrics and one cubic. The one on the right corresponds to Example \ref{ex:nobip2}, its toric ideal is minimally generated by 8 quadrics.}
\label{fig:biconchordalnoquadnomg}
\end{figure}

With SageMath, one can check that every Gr\"obner basis of $I_G$ at least 10 elements. Therefore, $G$ is not an MG-graph.
}
\end{example}

\begin{example} \label{ex:nobip2} {\rm
On the right part of Figure \ref{fig:biconchordalnoquadnomg} we present a chordal graph (graph number 36265 in {\tt houseofgraphs.org}), whose corresponding toric ideal is minimally generated by eight quadrics.

\[ \begin{array}{llrrrrrrr}
I_G & = &\langle &  x_9 x_{10}-x_2 x_{11}, & x_6 x_{10}-x_1 x_{11}, &  x_1 x_9-x_3 x_{12}, & x_2 x_8-x_7 x_{12}, \\ & &  & x_6 x_7-x_3 x_8, & x_2 x_6-x_3 x_{12},  & x_3 x_5-x_4 x_9, & x_1 x_5-x_4 x_{12} & \rangle.  \end{array} \]


One can check that every Gr\"obner basis of $I_G$ has at least 9 elements, which means that the graph is not an {\rm MG}-graph.

}
\end{example}

\section{Conclusion}\label{sectionconclusion}

The aim of this paper has been to study two natural classes of ideals:
\begin{itemize}
    \item UMG-ideals, i.e.,  ideals for which every reduced Gr\"obner basis is a minimal generating set;
    \item MG-ideals, i.e., ideals that admit at least one reduced Gr\"obner basis which is a minimal generating set.
\end{itemize}
We have explored these notions within the framework of toric ideals associated to graphs,

We have proved that the toric ideal of a graph is a UMG-ideal if and only if it is generalized robust (Theorem \ref{thm:UMGgenrobust}). Our proof of this equivalence relies heavily on the combinatorial structure of toric ideals of graphs. In particular, we observed that this equivalence does not hold in general for arbitrary toric ideals. For instance, there exist $1$-dimensional toric ideals that are UMG-ideals but not generalized robust. On the other hand, as a consequence of \cite{Nacho-T1} and \cite{Nacho-T2}, every $1$-dimensional toric ideal that is generalized robust is also a UMG-ideal. Whether this implication extends to all toric ideals remains an open question. Moreover, when $G$ is a bipartite graph, we proved in Corollary \ref{chordless-UMG} that the notions of chordless, robust, UMG and generalized robust all coincide for the ideal $I_G$.

Regarding MG-ideals, our main contributions focus on toric ideals of bipartite graphs. In Corollary \ref{ci-MG ideal}, we showed that if the toric ideal $I_G$ of a bipartite graph is a complete intersection, then it is an MG-ideal. This result, together with the description of bipartite graphs having complete intersection toric ideal by means of ring graphs \cite[Corollary 3.4]{GRV} yield that, for bipartite graphs, the following conditions are equivalent: 
\begin{itemize} 
\item[(a)] $I_G$  has a complete intersection initial ideal,
\item[(b)] $I_G$ is a complete intersection, and
\item[(c)] $G$ is a ring graph.  
\end{itemize}
This provides a partial answer to \cite[Open Problem 5.2]{Nacho-T1} when the graph is bipartite. In the non-bipartite case, we conjecture that conditions (a) and (b) are also equivalent. It is worth pointing out that the description of non-bipartite graphs with complete intersection toric ideal is significantly more complex (see, e.g., \cite{BGR, TT, TT2}).

In Theorem \ref{Grobner-thetas}, which is the main result of this manuscript, we have proved that whenever $G$ is a bipartite graph and $I_G$ is generated in one degree, then $I_G$ is an MG-ideal. This result extends one of Ohsugi and Hibi which corresponds to the case of generated by quadrics. Our proof of Theorem \ref{Grobner-thetas} is based on Theorem \ref{construction-thetas}, a combinatorial description of graphs whose chordless cycles have all length $2k$ for some $k \geq 3$. We discussed that two possible extensions of the result for nonbipartite graphs do not hold since there are chordal graphs providing counterexamples for both of them. Nevertheless, we do not know whether chordality is the only obstruction. More precisely, we do not know if the following holds: If all chordless cycles of $G$ have the same length $\ell \geq 4$, then $G$ is an MG-graph. 
Indeed, following similar ideas to the ones in Theorem \ref{construction-thetas}, one can prove the following structural result:
\begin{proposition}\label{construccion-thetas-impar} Let $k \geq 3$. All chordless cycles of a graph have length $2k-1$ if and only if all its biconnected blocks are a 2-clique sum of of chordless cycles of length $2k-1$
\end{proposition}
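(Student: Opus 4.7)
The plan is to adapt the proof of Theorem \ref{construction-thetas} to the odd case, where the argument turns out to be cleaner because no analogue of the $\varTheta_r^k$ graphs is needed. As in that proof, the statement reduces to biconnected graphs. The $(\Longleftarrow)$ direction is immediate from Lemma \ref{lemma-primitivecycles}: any chordless cycle in a $2$-clique sum of chordless $(2k-1)$-cycles must lie inside a single summand, so its length is $2k-1$.

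For $(\Longrightarrow)$, I would assume $G$ biconnected with all chordless cycles of length $g = 2k-1$ and let $H$ be a largest induced subgraph of $G$ that is a $2$-clique sum of chordless $(2k-1)$-cycles; $H$ exists (any chordless cycle of $G$ is a valid starting point) and is biconnected. Arguing by contradiction, suppose $H \neq G$ and pick a shortest path $\mathcal P = (u = w_0, w_1, \ldots, w_\ell = v)$ with $u, v \in V(H)$ and all interior vertices in $V(G) \setminus V(H)$. Since $g = 2k-1 \geq 5$, Lemma \ref{lemma-Menger} combined with the minimality of $\mathcal P$ yields $\ell \geq 3$ and rules out unwanted edges between $V(\mathcal P)$ and $V(H)$, so the induced subgraph $G'$ on $V(\mathcal P) \cup V(H)$ satisfies $E(G') = E(\mathcal P) \cup E(H)$.

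Next, using Menger's theorem in $H$, I would pick two internally disjoint $u$--$v$ paths $\mathcal P_1, \mathcal P_2$ of lengths $\ell_1 \leq \ell_2$ with $(\ell_1, \ell_2)$ lexicographically minimal. The cycle $c_1 = (\mathcal P, -\mathcal P_1)$ is chordless by the minimality of $\ell_1$ together with $E(G') = E(\mathcal P) \cup E(H)$, hence $\ell + \ell_1 = 2k-1$. Applying Lemma \ref{lemma_cycles} to the three cycles formed by pairs of $\{\mathcal P, \mathcal P_1, \mathcal P_2\}$ and combining the resulting congruences gives $2(\ell - 1) \equiv 0 \pmod{2k-3}$. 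Here lies the key simplification relative to the even case: because $2k-3$ is odd, it is coprime to $2$, so $\ell - 1 \equiv 0 \pmod{2k-3}$, and the bounds $3 \leq \ell \leq 2k-2$ force $\ell = 2k-2$ and $\ell_1 = 1$. Thus $\{u, v\}$ is already an edge of $H$ and $c_1$ is a chordless $(2k-1)$-cycle glued to $H$ along this edge, so $G'$ is a $2$-clique sum of chordless $(2k-1)$-cycles, contradicting the maximality of $H$.

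The main obstacle in the even case of Theorem \ref{construction-thetas} was the subcase $\ell = k$, which forced a structural detour through the $\varTheta_r^k$ graphs. That obstacle disappears here: the coprimality of $2$ and $2k-3$ collapses the congruence to a single value of $\ell$, and no new building block beyond the chordless $(2k-1)$-cycle is required. The only residual care is to verify that $c_1$ is genuinely chordless and that $G'$ really is a $2$-clique sum; both follow from the minimality of $\mathcal P$ and $\mathcal P_1$ together with Lemma \ref{lemma-Menger}, whose applicability is precisely where the hypothesis $k \geq 3$ (equivalently $g \geq 5$) is used.
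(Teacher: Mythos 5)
Your proof is correct and is essentially the argument the paper has in mind: the paper states Proposition \ref{construccion-thetas-impar} without proof, noting only that it follows ``from similar ideas to the ones in Theorem \ref{construction-thetas}'', and your write-up is exactly that adaptation. In particular, your key observation that $2(\ell-1)\equiv 0 \pmod{2k-3}$ together with $\gcd(2,2k-3)=1$ forces $\ell = 2k-2$ and $\ell_1 = 1$ correctly explains why the $\ell = k$ subcase (and hence any $\varTheta_r^k$-type building block beyond the chordless $(2k-1)$-cycle) disappears in the odd case.
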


In Proposition \ref{induced-subgraphsMG}, we proved that the class of MG-graphs (i.e. those having an MG-ideal) is closed under taking induced subgraphs. This together with the fact that $Q_3$ is not an MG-graph (see Example \ref{First-BAD}) yields that every graph with an induced cube $Q_3$ is not an MG-graph. To the best of our knowledge, $Q_3$ is the only known example of a bipartite graph that is not an MG-graph. Characterizing MG-graphs in general remains an open problem. We believe that even the bipartite case presents significant challenges and merits further investigation.

\medskip

\noindent {\bf Acknowledgments}
\newline
This paper was written during the visit of the third author at the Department of Mathematics of the Universidad de La Laguna (ULL), whose hospitality is gratefully acknowledged.

The first and third authors are partially supported by the Spanish MICINN project ACOGE PID2022-137283NB-C22.


\begin{thebibliography}{00}
\bibitem{BOOCHER2} F. Ardila, A.Boocher, The closure of a linear space in a product of lines, J. Algebr. Comb. 43, (2016), 199--235.
\bibitem{BGR} I. Bermejo, I. García-Marco, E. Reyes. Graphs and complete intersection toric ideals. J. Algebra Appl. 14, (2015), no. 9, 1540011, 37 pp. 
\bibitem{BOOCHER} A. Boocher, E. Robeva, Robust toric ideals, J. Symbolic Comput. 68 (1), (2015), 254-264.
\bibitem{BOOCHER3} A. Boocher, B.C. Brown, T. Duff, L. Lyman, T. Murayama, A. Nesky, K. Schaefer, Robust Graph Ideals, Ann. Comb. 19 (4), (2015), 641-660.
\bibitem{THO1} H. Charalambous, A. Thoma, M. Vladoiu, Markov bases and generalized Lawrence liftings, Ann. Comb. 19 (4), (2015), 661-669.
\bibitem{THO3} H. Charalambous, A. Thoma, M. Vladoiu, Minimal generating sets of Lattice Ideals,  
Collect. Math. 68, (2017), 377-400.
\bibitem{CDR} A. Conca, E. De Negri, M.E. Rossi, Koszul algebras and regularity. Commutative algebra, 285–315, Springer, New York, 2013.
\bibitem{CHT} A. Conca, S. Hosten, R.R. Thomas, Nice initial complexes of some classical ideals, Algebraic and Geometric Combinatorics 11-42, Contemp. Math. 423, Amer. Math. Soc. Providence, RI, 2006.
\bibitem{housegraphs} K. Coolsaet, S. D'hondt, J. Goedgebeur, House of Graphs 2.0: A database of interesting graphs and more, Discrete Appl. Math. 325, (2023), 97--107. Available at {\tt https://houseofgraphs.org}.
\bibitem{CLS} D.A. Cox, J.B. Little, H.K. Schenck. Toric varieties. Graduate Studies in Mathematics, 124. American Mathematical Society, Providence, RI, 2011. xxiv+841 pp.
\bibitem{CLO} D.A. Cox, J. Little and D. O'Shea, Ideals, varieties and algorithms, 4th ed. Springer (2015).
\bibitem{DELOERA} J.A. De Loera, B. Sturmfels, R.R. Thomas, Gr\"obner bases and triangulations of the second hypersimplex, Combinatorica 15 (3), (1995), 409-424.
\bibitem{DST} P. Diaconis, B. Sturmfels, Algebraic algorithms for sampling from conditional distributions, Ann. Stat. 26:1, (1998), 363-397.
\bibitem{DHS} P. Dueck, S. Hosten and B. Sturmfels, Normal toric ideals of low codimension, J. Pure Appl. Alg. 213 (8), (2009),
1636--1641,
\bibitem{ES} D. Eisenbud, B. Sturmfels, Binomial ideals, Duke Math. J. 84, (1996), 1-45.
\bibitem{VanTuyl} G. Favacchio, J. Hofscheier, G. Keiper, A. Van Tyul, Splittings of toric ideals, J. Algebra 574, (2021), 409-433.
\bibitem{Nacho-T1} I.Garcia-Marco, Ch.Tatakis, On robustness and related properties on toric ideals, J. Algebr. Comb. 57 (1), (2023), 21-52.
\bibitem{Nacho-T2} I.Garcia-Marco, P.A. Garcia-Sanchez, I. Ojeda, Ch. Tatakis, Universally free numerical semigroups, J. Pure Appl. Alg. 228 (5), (2024), 107551.
\bibitem{GSS} P. Gimenez, I. Sengupta, H. Srinivasan, Minimal graded free resolutions for monomial curves defined by arithmetic sequences J. Algebra, 338 (2013), pp. 294310.
\bibitem{GS} P. Gimenez, H. Srinivasan, Gluing and splitting of homogeneous toric ideals, J. Algebra 667, (2025), 911--930.
\bibitem{GRV} I. Gitler, E. Reyes, R.Villareal, Ring graphs and complete intersection toric ideals, Discrete Math. 310, (2010), 430-441.
\bibitem{HHHKR} J. Herzog, T. Hibi, F. Hreinsdotir, T. Kahle, J. Rauh, Binomial edge ideals and conditional independence statements, Adv. Appl. Math. 45, (2010), 317–333.
\bibitem{HIBI} T. Hibi, K. Nishiyama, H. Ohsugi, A. Shikama, Many toric ideals generated by quadratic binomials possess no quadratic Gr\"obner bases, J. Algebra 408, (2014), 138-146.
\bibitem{Huang} J. Huang, Representation characterizations of chordal bipartite graphs, J. Combin. Theory B 96, (2006), 673--683.
\bibitem{KTV} D. Kosta, A. Thoma, M. Vladoiu, On the strongly robust property of toric ideals,  J. Algebra (616), (2023), 1-25.
\bibitem{KTV2} D. Kosta, A. Thoma, M. Vladoiu, The strongly robust simplicial complex of monomial curves,  J. Algebr. Comb. 60, (2024), 701-721 
\bibitem{McKay} B.D. McKay, A. Piperno, Practical Graph Isomorphism II, J. Symb. Comput. 60, (2014), 94-112.
\bibitem{MS} E. Miller, B. Sturmfels,  Combinatorial Commutative
Algebra, Graduate Texts in Mathematics 227  Springer
Verlag, New York 2005.
\bibitem{OHH} H. Ohsugi, J. Herzog, and T. Hibi, Combinatorial pure subrings, Osaka J. Math. 37, (2000), 745--757.
\bibitem{Ohsugi} H. Ohsugi, T. Hibi, Koszul Bipartite Graphs, Adv. Appl. Math. 22, (1999), 25--28.  
\bibitem{OhsugiHibi} H. Ohsugi, T. Hibi, Toric ideals generated by quadratic binomials, J. Algebra 218, (1999) 509--527.
\bibitem{TT1} E. Reyes, Ch. Tatakis, A. Thoma, Minimal generators of toric ideals of graphs, Adv. Appl. MaRees algebras of edge idealsth. 48 (1), (2012), 64-78.
\bibitem{sage} SageMath, the Sage Mathematics Software System (Version 9.3), The Sage Developers, 2023, https://www.sagemath.org.
\bibitem{ST} B. Sturmfels,  Gr{\"o}bner Bases and Convex Polytopes. University Lecture Series,  No. 8 American Mathematical Society Providence,  R.I. 1995.
\bibitem{SZ} B. Sturmfels and A. Zelevinsky, Maximal minors and their leading terms, Adv. Math. 98 (1), (1993), 65112.
\bibitem{Sullivant} S. Sullivant, Strongly robust toric ideals in codimension 2, J. Alg. Statistics 10 (1), (2019), 128-136.
\bibitem{TATA} Ch. Tatakis, Generalized robust toric ideals, J. Pure Appl. Alg. 220, (2016), 163-177.
\bibitem{THOMA} Ch. Tatakis, A. Thoma, On the universal Gr\"obner bases of toric ideals of graphs, J. Combin. Theory Ser. A 118, (2011), 1540-1548.
\bibitem{TT} Ch. Tatakis, A. Thoma, On complete intersection toric ideals of graphs, J. Algebraic Combin. 38 (2), (2013), 351370.
\bibitem{TT2} Ch. Tatakis, A. Thoma, The structure of complete intersection graphs and their planarity, preprint.
\bibitem{WL1} Z. Wang, D. Lu, Betti numbers of normal edge rings (I), arXiv:2404.10672 [math.AC].
\bibitem{WL2} Z. Wang, D. Lu, Betti numbers of normal edge rings (II), 	arXiv:2503.03171 [math.AC].
\bibitem{VILL1} R.H. Villarreal, Rees algebras of edge ideals. Comm. Algebra 23, (1995), 3513–3524.
\bibitem{VILL2} R.H. Villarreal, Monomial Algebras, Monographs and Research Notes in Mathematics, 2nd edn. CRC Press, Boca Raton (2015).


\end{thebibliography}
\end{document}